\theoremstyle{plain}
\newtheorem{main}{Theorem}
\newtheorem{thm}{Theorem}[section]
\newtheorem*{thm*}{Theorem}
\newtheorem{lem}[thm]{Lemma}
\newtheorem{prop}[thm]{Proposition}
\newtheorem*{prop*}{Proposition}
\newtheorem{cor}[thm]{Corollary}
\newtheorem*{cor*}{Corollary}
\theoremstyle{definition}
\newtheorem{defn}[thm]{Definition}
\newtheorem*{defn*}{Definition}
\newtheorem*{question*}{Question}
\newtheorem*{conv*}{Convention}
\newcommand{\N}{\mathbb{N}}
\renewcommand{\phi}{\varphi}
\def\N{{\mathbb{N}}}
\def\u{{\widehat{\mathcal{U}}}}
\def\a{{\alpha}}
\def\b{{\beta}}
\title{Tensorial Permanence of $K$-Stability for Diagonal AH-Algebras}
\author{Apurva Seth}
\address{Apurva~Seth,
Mathematical Institute, University of Oxford, Radcliffe Observatory, Andrew Wiles Building, Woodstock Rd, Oxford OX2 6GG, United Kingdom.}
\email{apurva.seth@maths.ox.ac.uk, apurvaseth14@gmail.com}
\subjclass[2020]%
{Primary
46L85;  Secondary 46L80.
}
\begin{document} 

\begin{abstract}
We study $K$-stability for tensor products of diagonal AH-algebras with arbitrary C*-algebras. Our main result provides a characterization of $K$-stability: For a diagonal AH-algebra $A = \varinjlim (A_i, \varphi_i)$, $A \otimes B$ is $K$-stable for every C*-algebra $B$ if and only if the sizes of the matrix blocks in the inductive system grow without bound. As applications, we show that non-$\mathcal{Z}$-stable Villadsen algebras of the first kind are $K$-stable when tensored with any C*-algebra. Moreover, any simple, unital, infinite-dimensional diagonal AH-algebra automatically satisfies this growth condition, and therefore its tensor product with arbitrary C*-algebras is always $K$-stable.
\end{abstract}
\maketitle

\section{Introduction}
Nonstable $K$-theory for C*-algebras is the study of homotopy groups of the unitary group of a C*-algebra. These groups generally contain finer topological information than ordinary $K$-theory, but they are notoriously difficult to compute, even for the simplest examples. For instance, when $A=\mathbb{C}$, $\pi_1(U(\mathbb{C}))\cong \mathbb{Z}$, and  $\pi_n(U(\mathbb{C}))=0$, for $n>1$. For higher matrix algebras $A=M_k(\mathbb{C})$, the unitary group has much more intricate topology, and $\pi_n(U_k(\mathbb{C}))$ remains unknown beyond small dimensions. 
Rieffel~(\cite{rieffel12}) showed that for the noncommutative irrational torus, the homotopy groups of the unitary group are naturally isomorphic to the corresponding 
$K$-theory groups. At that time, this phenomenon was not given a formal name. Thomsen~(\cite{thomsen}) later extended the theory to incorporate nonunital C*-algebras and formalized the property, introducing the term \emph{$K$-stability} to describe C*-algebras for which the homotopy groups of the unitary group coincide with the corresponding 
$K$-theory groups.
In such cases, the computation of nonstable $K$-theory reduces to that of ordinary $K$-theory, which is often easier to handle and better understood, and many interesting C*-algebras have been shown to be $K$-stable. This class includes 
\begin{itemize}
    \item noncommutative irrational torus (\cite{rieffel12}),
    \item purely infinite simple C*-algebras (\cite{zhang2001}),
    \item nonelementary simple C*-algebras with real rank zero and stable rank one (\cite{zhang}). 
\end{itemize}

Given that 
$K$-stability appears in such diverse classes of C*-algebras, it is natural to ask how this property behaves under various C*-algebraic constructions. Indeed, 
$K$-stability is preserved under matrix amplifications, finite direct sums, inductive limits, and extensions (in the sense that if two out of three algebras in a short exact sequence are 
$K$-stable, so is the third). These permanence properties lead naturally to the question of how 
$K$-stability behaves under taking tensor products.
Formally, the question is:
if $A$ is $K$-stable, is the minimal tensor product $A\otimes B$ $K$-stable for all C*-algebras $B$, or for all $B$ in  some class of C*-algebras?
It is important to note that we ask whether
$K$-stability of a single factor is strong enough to ensure that the tensor product remains 
$K$-stable. In many familiar situations, this is indeed the case. For instance, $C(X)$ is not 
$K$-stable for any compact Hausdorff space $X$, yet $C(X)\otimes A$ is 
$K$-stable whenever $A$ is (\cite[Lemma 2.1]{vaidyanathan}). Likewise, although $M_n(\mathbb{C})$ is not 
$K$-stable for any $n\in\mathbb{N}$, the tensor product $M_n(\mathbb{C})\otimes A$ is 
$K$-stable whenever $A$ is.

Despite these examples, this kind of tensorial permanence is far from automatic. Tensoring can alter the topology of the unitary group in subtle ways. In general, there is no canonical way to identify unitaries in the tensor product of two C*-algebras with unitaries coming from either factor.
 However, for many classes of C*-algebras, $K$-stability is preserved under taking tensor products.  For instance,  Thomsen~(\cite{thomsen}) showed that if $A$ is either the Cuntz algebra $\mathcal{O}_n$ or an infinite-dimensional simple AF-algebra, then $A \otimes B$ is $K$-stable for every C*-algebra $B$. Furthermore, if $A$ is $\mathcal{Z}$-stable, then by the results of Jiang and Su~(\cite{jiang1997}) and Hua~(\cite{hua2025k}), $A$ is $K$-stable, and so is $A\otimes B$ for any C*-algebra $B$.  

To better understand these questions, we turn to diagonal AH-algebras, which are inductive limits of finite direct sums of homogeneous 
C*-algebras with connecting maps that are diagonal in a suitable sense (see Definition~\ref{def_diagonal_maps}). Diagonal AH‑algebras have been studied for several decades and have played a significant role in the theory of C*-algebras. Historically, they were used to construct examples of AH-algebras that did not have slow dimension growth (\cite{villadsen1998simple}), showing that inductive limits of homogeneous C*-algebras can behave in unexpected ways. Subsequently, they were employed to provide counterexamples to Elliott’s classification conjecture (\cite{Andrew_classification}). These developments highlight the relevance of diagonal AH-algebras as a testing ground for structural phenomena.

In this paper, we study the 
$K$-stability of tensor products of these algebras. The fact that the connecting maps in these inductive systems are diagonal is used in an essential way to give a complete characterization of $K$-stability of the tensor products of such algebras. For general AH-algebras, the situation is much more subtle. In fact, the problem is already difficult in the simple case: Villadsen~(\cite{MR1916650}) constructed examples of simple AH-algebras that are not 
$K$-stable, showing the limitations in the non-diagonal setting. The main result of the paper is the following.
\begin{main}[Theorem~\ref{mainthm}]\label{mainthm1}
Let 
\[
A = \varinjlim (A_i, \varphi_i)
\] 
be a diagonal AH-algebra. Then the following statements are equivalent:
\begin{enumerate}
    \item $\lim\limits_{i \to \infty} d_{A_i} = \infty$.
    \item $A \otimes B$ is $K$-stable for any C*-algebra $B$.
\end{enumerate}
\end{main}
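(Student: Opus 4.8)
My plan is to treat the two implications separately, with $(1)\Rightarrow(2)$ as the substantive half. Throughout I use two standard facts: that homotopy groups of unitary groups commute with inductive limits of C*-algebras, so that for any $B$, any $n$, and any $k$,
\[
\pi_k\big(U_n(A\otimes B)\big)\;\cong\;\varinjlim_i\,\pi_k\big(U_n(A_i\otimes B)\big),
\]
and that, by Thomsen's definition, $A\otimes B$ is $K$-stable precisely when each outer stabilization $U_n(A\otimes B)\hookrightarrow U_{n+1}(A\otimes B)$ is an isomorphism on every $\pi_k$. The basic observation is that for a block $M_d(C(X))$ one has $U_n(M_d(C(X))\otimes B)=U_{nd}(C(X,B))$, so the outer stabilization $U_n\hookrightarrow U_{n+1}$ is the inclusion $U_{nd}(C(X,B))\hookrightarrow U_{(n+1)d}(C(X,B))$ enlarging the matrix size by the \emph{fibre dimension} $d$. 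Thus the hypothesis $d_{A_i}\to\infty$ says exactly that, along the system, each single stabilization step adds a block whose size tends to infinity. The tool I will exploit is the rotation (Whitehead) homotopy: for unitaries $a,b$ of size $m$ over any unital $C$, one has $\mathrm{diag}(a,b)\simeq\mathrm{diag}(ab,1_m)$ inside $U_{2m}(C)$, and iterating, $\mathrm{diag}(u_1,\dots,u_M)\simeq\mathrm{diag}(u_1\cdots u_M,\,1)$ inside $U_{Mm}(C)$.

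For $(1)\Rightarrow(2)$ I fix $n,k$ and prove that the colimit of the stabilization maps $\pi_k(U_n(A_i\otimes B))\to\pi_k(U_{n+1}(A_i\otimes B))$ is an isomorphism. Because the connecting maps are diagonal, the composite map $\varphi_{ij}\otimes\mathrm{id}_B$ sends a unitary $v$ of size $(n+1)d_{A_i}$ over $C(X_i,B)$ to $\mathrm{diag}(v\circ\lambda_1,\dots,v\circ\lambda_M)$, a block-diagonal unitary of size $(n+1)d_{A_j}$ over $C(X_j,B)$. Applying the rotation homotopy collapses this to $\mathrm{diag}\big(\textstyle\prod_\ell (v\circ\lambda_\ell),\,1\big)$, whose nontrivial part has size only $(n+1)d_{A_i}$, sitting in a corner of $U_{(n+1)d_{A_j}}$. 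Since $d_{A_j}\to\infty$, I may choose $j$ so large that $n\,d_{A_j}\ge (n+1)\,d_{A_i}$ (and $d_{A_j}\ge 2d_{A_i}$, giving room for the rotations); then this corner lies inside the first $n\,d_{A_j}$ coordinates, i.e. the class is in the image of the stabilization from $U_n$. This is surjectivity on the colimit, and injectivity follows from the dual collapse argument. The crucial point is that the rotation homotopy is valid over \emph{any} unital $C=C(X_j,B)$, with no stable-range hypothesis on $B$: all the gain comes from the diagonal, size-increasing shape of $\varphi_i$ together with $d_{A_i}\to\infty$. The reparametrisations $\lambda_\ell$ only change which unitary the product is, never its size, so they are harmless; direct sums and the spreading of one block across several are handled coordinatewise, uniformly, because condition (1) forces \emph{every} target block to be large.

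For $(2)\Rightarrow(1)$ I argue by contraposition. If $d_{A_i}\not\to\infty$ then some block size stays bounded by a fixed $N$ cofinally, and I want a single $B$ for which $A\otimes B$ fails $K$-stability. Following a thread of bounded-size blocks through the diagonal connecting maps (where such a block can only map into bounded blocks, forcing multiplicity one along the thread) produces a quotient system whose limit exhibits, in the cleanest situation, a split surjection $A\otimes B\twoheadrightarrow M_N(B)$. Since a retract of a $K$-stable algebra is $K$-stable, it then suffices to recall that $M_N(\mathbb{C})$ is not $K$-stable: already $B=\mathbb{C}$, or a sphere $B=C(S^{2N+1})$ chosen to detect the unstable homotopy carried by $\pi_{2N}(U(N))\to\pi_{2N}(U(N+1))$, breaks $K$-stability of $A\otimes B$.

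The step I expect to be the main obstacle is this last extraction in $(2)\Rightarrow(1)$: manufacturing from ``bounded minimal block size'' a genuine \emph{retract} (rather than a mere quotient, to which $K$-stability need not descend) that survives to the inductive limit, which is exactly where the precise diagonal form of $\varphi_i$ must be used and where an arbitrary AH connecting map would fail. This is the structural reason behind the dichotomy and the point at which Villadsen's non-diagonal simple examples escape it. By comparison, once the collapse lemma is in hand the positive direction is largely mechanical, its only delicate aspect being the uniform bookkeeping over direct sums, which condition (1) renders routine since it makes all blocks grow at once.
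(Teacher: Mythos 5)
Your implication $(1)\Rightarrow(2)$ is in substance the paper's own argument: push everything through the inductive limit, collapse the diagonal image $\mathrm{diag}(v\circ\lambda_1,\dots,v\circ\lambda_M)$ by the Whitehead rotation, and use the growth of block sizes to create the room needed to land in (injectivity) or lift from (surjectivity) the smaller matrix level; your direct treatment of all $(n,k)$ in the colimit is equivalent to the paper's reduction, via closure of the class under matrix amplification and tensoring with $C(S^m)$, to the single map $k_0(\iota_{2,A\otimes B})$. Two points you dismiss as routine are where the paper does real work, though. First, $B$ is arbitrary and possibly non-unital, so the whole argument must be run in Thomsen's quasi-unitary groups $\widehat{\mathcal{U}}_n$ rather than unitary groups; your identification $U_n(M_d(C(X))\otimes B)=U_{nd}(C(X,B))$ does not typecheck for non-unital $B$. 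Second, and more substantively, in a direct sum the collapse breaks down for a summand entering a given target block with multiplicity one: such a block $v_a\circ\lambda$ has nothing to collapse against, and to contract it one must borrow zero/identity space of size $m_a$ that is freed only by collapsing the multiplicity-$\geq 2$ families. This is precisely why the paper passes to a stage with $d_{A_p}\geq 3(m_1+\cdots+m_n)$ and splits the indices into those with $k_{ij}=1$ and those with $k_{ij}\geq 2$; the bookkeeping is not coordinatewise in the source blocks, although your underlying intuition (every target block is large, so each target block has enough internal room) is the correct one.

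The genuine gap is in $(2)\Rightarrow(1)$, and it is exactly the step you flag as the main obstacle and then leave open: transferring failure of $K$-stability from the bounded-size thread back to $A$. You propose to manufacture a split surjection (retract) $A\otimes B\twoheadrightarrow M_N(B)$, but the paper never produces a retract, and one should not expect to: the finite-stage splittings $A_i=(C(X_{i,p_i})\otimes M_L)\oplus I_i$ are not compatible with the connecting maps (the distinguished summand also maps into the other summands), so the limit surjection $A\to C(X_\infty)\otimes M_L$ carries no evident splitting. The paper's actual mechanism is different and avoids this entirely: since each finite-stage sequence $0\to I_i\to A_i\to C(X_{i,p_i})\otimes M_L\to 0$ is split, applying the half-exact continuous functor $f_m=k_m(\cdot)\otimes\mathbb{Q}$ yields short exact sequences at every stage, hence in the limit a short exact sequence $0\to f_m(I)\to f_m(A)\to f_m(C(X_\infty)\otimes M_L)\to 0$, natural with respect to the stabilization maps $\iota_2$. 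A diagram chase then shows that surjectivity of $f_m(\iota_{2,A})$ would force surjectivity of $f_m(\iota_{2,\,C(X_\infty)\otimes M_L})$, which fails for odd $m$ with $2L\leq m\leq 4L-1$ because rationally $f_m(M_L)=0$ while $f_m(M_{2L})=\mathbb{Q}$; this contradicts $K$-stability of $A$ already for $B=\mathbb{C}$. Note also that your proposed detector, the map $\pi_{2N}(U(N))\to\pi_{2N}(U(N+1))$, is a map from a finite (torsion) group to $0$ and is rationally invisible, so it could not drive an argument of this kind; the paper instead exploits the odd rational homotopy of $U(L)$ versus $U(2L)$. As written, your contrapositive direction is incomplete at its decisive step.
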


The notion $d_A$ for a homogeneous C*-algebra $A$ is the minimal matrix size appearing among the homogeneous summands of $A$; see Section~\ref{sec_main_result} for the precise definition. As an application, it follows that if $A$ is a unital $K$-stable AF-algebra or a non-$\mathcal{Z}$-stable Villadsen algebra of the first type, then $A\otimes B$ is $K$-stable for any C*-algebra $B$.

Rieffel (\cite{rieffel12}) showed that for a unital C*-algebra with stable rank one, the natural inclusions of unitary groups induce isomorphisms \[\pi_0(U_n(A))\cong \pi_0(U_{n+1}(A))\cong K_1(A),\] for all $n\geq 1$, giving partial $K$-stability results. Elliott, Ho, and Toms~(\cite{elliott2009class}) have further shown that simple, unital, diagonal AH-algebras always have stable rank one. Using Theorem~\ref{mainthm1}, we can extend this partial picture: If such an algebra is also infinite-dimensional, it automatically satisfies the growth condition in Theorem~\ref{mainthm1}, and consequently,  tensor products of these algebras with arbitrary C*-algebras are 
$K$-stable.

\subsection*{Acknowledgements} I would like to express my heartfelt thanks to Stuart White for his guidance and insightful ideas throughout this project. I am also very grateful to Andrew Toms for many discussions and valuable remarks. I would further like to thank Jakub Curda and Shanshan Hua for their thoughtful and encouraging comments on the first draft of this paper.

 \subsection*{Funding}
 The author is supported by the Engineering and Physical Sciences 
Research Council (EP/X026647/1). For the purpose of open access, the author 
has
applied a CC BY public copyright license to any author-accepted manuscript 
arising from this submission.

\section{Preliminaries}\label{sec_pre}
In this section, we recall the definitions and basic facts concerning diagonal AH-algebras and Thomsen’s construction of nonstable $K$-theory groups for C*-algebras.
\subsection{AH systems with diagonal maps}

Let us recall the notion of \emph{diagonal AH-algebras} as introduced in \cite{elliott2009class}.  Throughout the paper, for $m\in\mathbb{N}$, $M_m$ will always mean the finite-dimensional C*-algebra $M_m(\mathbb{C})$, and we will use these notations interchangeably.  

Let $X$ and $Y$ be compact Hausdorff spaces. A unital $*$-homomorphism 
\[
\varphi\colon C(X)\otimes M_m\rightarrow C(Y)\otimes M_{km}
\]
is called $\emph{diagonal}$, if there exist continuous maps $\lambda_1,\lambda_2,\hdots,\lambda_k\colon Y\rightarrow X$ such that 
\[
\varphi(f)=
\begin{pmatrix}
  f\circ \lambda_1 &0  &\dots &0 \\
  0               & f\circ\lambda_2 &\dots &0 \\
  \vdots &\vdots &\ddots &\vdots \\
  0 &0 &\dots & f\circ \lambda_k 
\end{pmatrix}.
\]  

In this case, one can associate $\phi$ with a  matrix $\begin{pmatrix}( \lambda_1,\lambda_2,\dots,\lambda_k)\end{pmatrix}$, whose entry is a tuple consisting of the functions $\{\lambda_i\}_{i=1}^k$, called as the \emph{eigenvalue functions} for $\varphi$. 

\begin{defn}\label{def_diagonal_maps}
    A unital $*$-homomorphism 
    \[
\phi\colon \bigoplus_{i=1}^n C(X_i)\otimes M_{m_i}\rightarrow C(Y)\otimes M_p
    \]
    is \emph{diagonal} if there exist natural numbers $k_1,k_2,\dots, k_n$ such that $\sum_{i=1}^n k_i m_i=  p$, and 
    \[
    \prescript{i}{}{}\phi \colon C(X_i)\otimes M_{m_i}\to C(Y)\otimes M_{k_im_i}
    \]
    is a diagonal map as defined earlier with matrix $\begin{pmatrix}(\lambda_1^i,\lambda_2^i,\dots \lambda_{k_i}^i)\end{pmatrix}$ such that 
    \[
    \phi=\text{diag}(\prescript{1}{}{}\phi,\prescript{2}{}{}\phi,\dots,\prescript{n}{}{}\phi).
    \]
\end{defn}

Note that, in the above definition, $k_i=0$ is allowed.   In this way, we can associate to $\phi$, an $n\times 1$ matrix whose entries are tuples consisting of the eigenvalue functions  associated to $\prescript{i}{}{}\phi$, for $i=1,2,\dots,n$ as given below:
\[
\begin{pmatrix}
    ( \lambda_1^1,\lambda_2^1,\dots, \lambda_{k_1}^1)\\[6pt]
    ( \lambda_1^2,\lambda_2^2,\dots, \lambda_{k_2}^2)\\
    \vdots\\[6pt]
    ( \lambda_1^n,\lambda_2^n,\dots, \lambda_{k_n}^n)
\end{pmatrix}.
\]

Furthermore, we say that a unital $*$-homomorphism 
\[
\phi\colon \bigoplus_{i=1}^n C(X_i)\otimes M_{m_i}\rightarrow \bigoplus_{j=1}^t C(Y_j)\otimes M_{p_j}
\]
is \emph{diagonal} if each restriction
\[
\phi^j\colon\bigoplus_{i=1}^n C(X_i)\otimes M_{m_i}\rightarrow C(Y_j)\otimes M_{p_j}
\]
is diagonal. As above, we can associate to $\phi$ an $n\times t$ matrix 

\[
\begin{pmatrix}
    (\lambda_{1}^{11}, \lambda_2^{11}, \dots, \lambda_{k_{11}}^{11}) & ( \lambda_{1}^{12}, \lambda_2^{12}, \dots, \lambda_{k_{12}}^{12}) & \dots & (\lambda_{1}^{1t}, \lambda_2^{1t}, \dots, \lambda_{k_{1t}}^{1t}) \\[6pt]
    (\lambda_{1}^{21}, \lambda_2^{21}, \dots, \lambda_{k_{21}}^{21}) & (\lambda_{1}^{22}, \lambda_2^{22}, \dots, \lambda_{k_{22}}^{22}) & \dots & (\lambda_{1}^{2t}, \lambda_2^{2t}, \dots, \lambda_{k_{2t}}^{2t}) \\[6pt]
    \vdots & \vdots & \ddots & \vdots \\[6pt]
    (\lambda_{1}^{n1}, \lambda_2^{n1}, \dots, \lambda_{k_{n1}}^{n1}) & (\lambda_{1}^{n2}, \lambda_2^{n2}, \dots, \lambda_{k_{n2}}^{n2}) & \dots & (\lambda_{1}^{nt}, \lambda_2^{nt}, \dots, \lambda_{k_{nt}}^{nt})
\end{pmatrix},
\]
where the $(i,j)$th entry is the tuple $( \lambda_{1}^{ij}, \lambda_2^{ij},\dots,\lambda_{k_{ij}}^{ij})$ associated to the diagonal map
\[
\prescript{i}{}{\phi}^{\,j}\colon C(X_i)\otimes M_{m_i}\rightarrow C(Y_j)\otimes M_{k_{ij}m_i}.
\]

An \emph{AH system with diagonal maps} is an inductive system 
\[
(A_i,\phi_i), \qquad i\in\mathbb{N},
\]
where each algebra
\begin{equation}\label{eqn:homogenous_summand}
A_i=\bigoplus_{l=1}^{n_i} C(X_{i,l})\otimes M_{m_{i,l}}
\end{equation}
is a finite direct sum of homogeneous building blocks with 
$X_{i,l}$ a connected compact metric space and $m_{i,l},n_i\in\mathbb{N}$, 
and where each connecting map $\phi_i$ is diagonal.
A C*-algebra $A$ is called a \emph{diagonal AH-algebra} if it is the inductive limit of some AH system with diagonal maps.  Any such inductive system $(A_i,\phi_i)$ will be called a \emph{decomposition} of $A$, and we write
\[
A = \varinjlim (A_i,\phi_i).
\]
 Note that the composition of a diagonal $*$-homomorphism is again diagonal.  Furthermore, it has been shown in \cite{elliott2009class} that the connecting maps $\varphi_i$ can be assumed to be injective; hence, we assume from this point onward that all connecting maps in diagonal AH systems are injective.
 
 \medskip
 
Lastly, let $A=\varinjlim (A_i,\phi_i)$ be a diagonal AH-algebra with $A_i$ as in~\eqref{eqn:homogenous_summand}, and $B$ be any C*-algebra (not necessarily unital). Then $A \otimes B$ is the inductive limit of 
\[
\big(A_i \otimes B, \ \phi_i \otimes \mathrm{id}_B\big),
\] 
where the maps $\phi_i \otimes \mathrm{id}_B$ are defined as follows: For $1\leq l\leq n_i$  and $1\leq j\leq n_{i+1}$,
     define  
    \[
   \prescript{l}{}(\phi_i \otimes \mathrm{id}_B)^{\,j} \colon C(X_{i,l}) \otimes M_{m_{i,l}}(B) \longrightarrow C(X_{i+1,j}) \otimes M_{k_{lj} m_{i,l}}(B)
    \] 
    as \[f\mapsto \text{diag}(f\circ \lambda_1^{lj}, \ldots, f\circ \lambda_{k_{lj}}^{lj} ),\]
      where
    \[
    \lambda_1^{lj}, \dots, \lambda_{k_{lj}}^{lj} \colon X_{i+1,j} \to X_{i,l}
    \] 
  are eigen value functions associated to $\prescript{l}{}{\phi_i}^{j}$. 
With this convention, $A \otimes B$ is canonically expressed as an inductive limit reflecting the original diagonal AH decomposition of $A$, and the maps $\phi_i \otimes \mathrm{id}_B$ are, in a natural sense, \emph{diagonal with respect to $B$}. In particular, this shows that matrix algebras over $A$ again admit an AH decomposition with diagonal connecting maps, such that the matrix representation of the connecting maps remains unchanged. Throughout the paper, we shall work with such decomposition for tensor products of diagonal AH-algebras.

\subsection{Nonstable $K$-theory}
Let us now recall the work of Thomsen
of constructing the nonstable $K$-groups associated to a C*-algebra. For more details, the reader may refer to \cite{thomsen}. 

Let $A$ be a C*-algebra (not necessarily unital). Define an associative composition $\bullet$ on $A$ by
\begin{equation}\label{eqn:composition}
a\bullet b=a+b-ab.
\end{equation}
An element $u\in A$ is said to be a quasi-unitary if
\[
u\bullet u^{\ast} = u^{\ast}\bullet u = 0.
\]
We write $\u(A)$ for the set of all quasi-unitary elements in $A$. Then, $\u(A)$ is a topological group with respect to the binary operation $\bullet$, with the identity element being $0_A$. If $B$ is a unital C*-algebra, we write $U(B)$ for its group of unitaries. Let $A^+$ denote the minimal unitization
of $A$. It follows by \cite[Lemma 1.2]{thomsen} that an element $a\in A$ is a quasi-unitary
if and only if $(1-a)\in U(A^+)$.

Let $X$ be any topological space and $G$ be a topological group. Choose a base point $+\in X$.
We can then consider the group $[X,G]_0$ consisting of the homotopy classes of base
point preserving continuous maps from $X$ to $G$, where the base point of $G$ is the identity element $e\in G$. Given a continuous base point preserving map
$f\colon X\rightarrow G$, we let $[f]$ denote its class in $[X,G]_0$.
Then, the group structure of $G$ gives $[X,G]_0$ the structure of a group, and  that $[X,\u(\cdot)]_0$ is a covariant  and homotopy invariant half-exact
functor from the category of C*-algebras to the category of groups.

Let $SA$ denote the suspension of the C*-algebra $A$, that is, 
\[
SA:=\left\{f\colon[0,1]\rightarrow A,\,\,\text{such that}\,\, f(0)=f(1)=0\right\}\cong C_*(S^1,A),
\]
where $C_*(S^1,A):=\{f\colon S^1\to A\,\,\, \text{such that}\,\, f(1)=0\}$.

For $n>1$, we set $S^nA= S(S^{n-1}(A))$.
The following result can be found in \cite[Lemma 2.3]{thomsen}.

\begin{lem}
    $[S^n, \u(A)]_0\cong \pi_n(\u(A))\cong \pi_0(\u(S^n A))$ for $n=0,1,2,\dots$.
\end{lem}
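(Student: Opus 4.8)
The plan is to reduce both isomorphisms to a single natural identification between the quasi-unitaries of a based function algebra and the based maps into the quasi-unitary group. Concretely, for a pointed compact space $X$ I would first establish that
\[
\u\big(C_*(X,A)\big) \;=\; C_*\big(X,\u(A)\big)
\]
as topological groups, where the right-hand side denotes the space of base-point-preserving continuous maps from $X$ into the topological group $\u(A)$, equipped with the uniform topology. This is essentially a pointwise observation: a function $f\in C_*(X,A)$ satisfies $f\bullet f^*=f^*\bullet f=0$ if and only if $f(x)\in\u(A)$ for every $x\in X$, while the condition $f(+)=0$ says exactly that $f$ sends the base point to the identity $0_A\in\u(A)$. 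Moreover the $\bullet$-operation on $\u(C_*(X,A))$ is computed pointwise, so it corresponds precisely to the pointwise group operation on $C_*(X,\u(A))$.

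Next I would feed the iterated suspension into this identification. Using the exponential law for based mapping spaces together with $(S^1)^{\wedge n}\cong S^n$, one has $S^nA\cong C_*(S^n,A)$, and hence
\[
\u(S^n A)\;\cong\;C_*\big(S^n,\u(A)\big).
\]
Passing to path components, and recalling that $\pi_0$ of a space of based maps into a topological group is exactly the set of based homotopy classes, this yields
\[
\pi_0\big(\u(S^n A)\big)\;\cong\;[S^n,\u(A)]_0.
\]
It then remains to match $[S^n,\u(A)]_0$ with $\pi_n(\u(A))$. As sets these coincide by the very definition of the homotopy groups of the pointed space $\u(A)$; the content is that the group structure on $[S^n,\u(A)]_0$ coming from the pointwise $\bullet$-multiplication agrees with the usual concatenation structure on $\pi_n(\u(A))$. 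For $n\geq 1$ this is the Eckmann--Hilton argument: the two multiplications share the unit $[0_A]$ and satisfy the interchange law, so they coincide (and are abelian); for $n=0$ the pointwise multiplication is by definition the group structure on $\pi_0(\u(A))$.

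I expect the main obstacle to be bookkeeping the topologies rather than any deep difficulty. One must check that the sup-norm topology that $\u(C_*(X,A))$ inherits as a subset of the C*-algebra $C_*(X,A)$ coincides with the uniform topology on the mapping space $C_*(X,\u(A))$, so that ``path component in $\u(S^nA)$'' and ``based homotopy class of maps $S^n\to\u(A)$'' are genuinely the same relation. Since $X$ is compact and $\u(A)\subseteq A$ carries the norm topology, uniform convergence of maps agrees with norm convergence in $C_*(X,A)$, which settles this point. A secondary item worth verifying carefully is that every identification in the displayed chain respects the group operations simultaneously, so that the chain of bijections is in fact a chain of group isomorphisms; this is exactly where the Eckmann--Hilton step guarantees compatibility with the standard homotopy-group structure.
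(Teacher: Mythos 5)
Your proposal is correct. Note that the paper does not actually prove this lemma; it states it as a quoted result, citing Thomsen's Lemma 2.3, so there is no internal argument to compare yours against --- what you have written is essentially the standard proof underlying that citation. Your three ingredients are the right ones and assembled in the right order: the pointwise identification $\u\big(C_*(X,A)\big)=C_*\big(X,\u(A)\big)$ as topological groups, the exponential-law identification $S^nA\cong C_*(S^n,A)$ (which, by induction on the paper's definition $S^nA=S(S^{n-1}A)$, reduces to $C_*\big(S^1,C_*(S^{n-1},A)\big)\cong C_*(S^1\wedge S^{n-1},A)$), and the Eckmann--Hilton argument reconciling the pointwise $\bullet$-product on $[S^n,\u(A)]_0$ with the concatenation product on $\pi_n(\u(A))$. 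You also correctly identify and dispose of the only delicate point: since $S^n$ is compact and $\u(A)$ carries the norm topology, the sup-norm topology on $\u(C_*(S^n,A))$ coincides with the uniform topology on the based mapping space, so path components in $\u(S^nA)$ are genuinely based homotopy classes of maps $S^n\to\u(A)$; this is what makes $\pi_0(\u(S^nA))\cong[S^n,\u(A)]_0$ legitimate. As a small bonus, every identification in your chain is natural in $A$, which is exactly the naturality assertion the paper records immediately after the lemma.
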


Moreover, the above isomorphism is natural in $A$.
\begin{defn}
    For $m \geq 0$, the \emph{nonstable $K$-theory groups} associated to a C*-algebra $A$ are defined to be
    \[
    k_m(A) := \pi_m(\u(A)).
    \]
\end{defn}
As a consequence of \cite[Proposition 2.1]{thomsen}  and \cite[Theorem 4.4]{handelman1978k}, we obtain that $k_m$ is a continuous homology theory.
\begin{defn}\label{Def_K_stable}
Let $A$ be a C*-algebra and $n\geq 2$. Let $\iota_{n,A}\colon M_{n-1}(A)\rightarrow M_{n}(A) $ be  the canonical inclusion map given by
\[
a\mapsto \begin{pmatrix}
    a&0\\
    0&0
\end{pmatrix}.
\]
Then $A$ is said to be \emph{$K$-stable} if the induced map $k_m(\iota_{n,A})\colon k_m(M_{n-1}(A))\rightarrow k_m(M_n(A))$ is an isomorphism for all $m\geq 0$ and $n\geq 2$.
\end{defn}
\section{Main Result}\label{sec_main_result}
In this section, we prove the main result of the paper and give some applications. We begin by fixing some notation that will be used throughout the section.
\subsection{Notational conventions}
 We fix some notational conventions we will use repeatedly: If $A$ is a C*-algebra and $a,b\in\u(A)$, we shall write $a\sim_h b$ in $\u(A)$ whenever there is a continuous map $f\colon[0,1]\rightarrow \u(A)$ such that $f(0)=a$ and $f(1)=b$. We shall denote $[a]$ to be the homotopy class of $a\in\u(A)$, and write $[a]=0$ in $\pi_0(\u(A))$, whenever $a\sim_h 0_A$ in $\u(A)$. Given $A=\bigoplus_{i=1}^n C(X_i)\otimes M_{m_i}$, we shall denote 
 \[
 d_A=\min\{m_1,m_2,\dots, m_{n}\}.
 \]
For C*-algebras $A$ and $B$, we shall denote $C_*(S^n, A)$ to be the C*-algebra of base point preserving functions $f \colon S^n \to A$, where the base point of $A$ is taken to be $0$. We denote by $A \otimes B$ the minimal tensor product of $A$ and $B$.  Furthermore, given an inductive limit decomposition $(A_i, \phi_i)$, we shall denote by 
\[
\phi_{p,r} \colon A_r \longrightarrow A_p
\]
the connecting $*$-homomorphism, defined for $r \leq p$ by 
\[
\phi_{p,r} = \phi_{p-1} \circ \phi_{p-2} \circ \cdots \circ \phi_r.
\]

We state below a few elementary lemmas that will be used in the proof of Theorem~\ref{mainthm}.

\begin{lem}\label{homotopy_lem}
Let $X_1$ and $X_2$ be topological spaces and $G$ be a topological group. Let $\lambda\colon X_2\rightarrow X_1$ be a continuous map. Let $C_{X_i,e}\in C(X_i,G)$, $i=1,2$  be the constant map at the identity $e\in G$. Let $v\sim_h C_{X_1,e}\in C(X_1,G)$. Then, $v\circ \lambda\sim_h C_{X_2,e}\in C(X_2,G)$.   
\end{lem}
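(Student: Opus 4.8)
The plan is to reduce the statement to the fact that a continuous group homomorphism between topological groups carries continuous paths to continuous paths, applied to the precomposition homomorphism induced by $\lambda$. First I would unpack the hypothesis: by the meaning of $\sim_h$, the relation $v\sim_h C_{X_1,e}$ in $C(X_1,G)$ provides a continuous path $f\colon[0,1]\to C(X_1,G)$ with $f(0)=v$ and $f(1)=C_{X_1,e}$, where $C(X_1,G)$ carries its natural topological-group structure (pointwise multiplication and inversion, with the compact-open topology; for metrizable $G$ and compact $X_1$ this is just uniform convergence). My goal is to produce an analogous path in $C(X_2,G)$ from $v\circ\lambda$ to $C_{X_2,e}$.

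The key object is the precomposition map
\[
\lambda^{*}\colon C(X_1,G)\longrightarrow C(X_2,G),\qquad \lambda^{*}(w)=w\circ\lambda .
\]
I would first record that $\lambda^{*}$ is a group homomorphism: since the operations on both function groups are pointwise, $(w_1 w_2)\circ\lambda=(w_1\circ\lambda)(w_2\circ\lambda)$ and $(w^{-1})\circ\lambda=(w\circ\lambda)^{-1}$. The one point that requires a genuine (if short) argument is that $\lambda^{*}$ is continuous. For this I would use the subbasis of the compact-open topology: for $K\subseteq X_2$ compact and $U\subseteq G$ open, the subbasic set $M(K,U)=\{g\in C(X_2,G): g(K)\subseteq U\}$ satisfies $(\lambda^{*})^{-1}(M(K,U))=M(\lambda(K),U)$, and $\lambda(K)$ is compact in $X_1$ because $\lambda$ is continuous; hence the preimage of each subbasic open set is open, so $\lambda^{*}$ is continuous. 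Note that this argument needs no local compactness of the spaces, which is why I prefer it to manipulating the homotopy as a map $X_1\times[0,1]\to G$ and invoking the exponential law.

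Finally I would assemble the conclusion. Since $\lambda^{*}$ is a continuous group homomorphism, the composite $\lambda^{*}\circ f\colon[0,1]\to C(X_2,G)$ is a continuous path. Evaluating at the endpoints gives $(\lambda^{*}\circ f)(0)=v\circ\lambda$ and $(\lambda^{*}\circ f)(1)=C_{X_1,e}\circ\lambda=C_{X_2,e}$, the last equality because the constant map at $e$ precomposed with $\lambda$ is again the constant map at $e$. This path witnesses $v\circ\lambda\sim_h C_{X_2,e}$ in $C(X_2,G)$, as required. The only real obstacle is the continuity of $\lambda^{*}$, and even that is routine once the correct topology is used; everything else is formal.
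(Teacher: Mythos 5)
Your proposal is correct and follows essentially the same route as the paper: both take the path $H$ (your $f$) witnessing $v\sim_h C_{X_1,e}$ and precompose with $\lambda$ to obtain the path $t\mapsto H(t)\circ\lambda$ in $C(X_2,G)$. The only difference is that you explicitly verify continuity of the precomposition map $\lambda^{*}$ via the compact-open subbasis, a point the paper's proof leaves implicit; this is a welcome refinement of detail, not a different argument.
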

\begin{proof}
 Let $H\colon[0,1]\rightarrow C(X_1,G)$ be such that
 \[
 H(0)=v\quad \text{and}\quad H(1)=C_{X_1,e}.
 \]
 Define $F\colon [0,1]\rightarrow C(X_2,G)$ as
 \[
 F(t)=H(t)\circ \lambda.
 \]
 Then, $F(0)=v\circ\lambda$, and $F(1)=C_{X_2,e}$, giving the required homotopy. 
\end{proof}

The following lemma is a version of Whitehead’s lemma (see \cite[Lemma 2.2.5]{rordam2000introduction}) for quasi-unitaries.

\begin{lem}\label{whiteheads}
    Let $B$ be any C*-algebra and $x,y\in \u(B)$. Then
    \[
    \begin{pmatrix}
        x &0\\
        0 &y
    \end{pmatrix}\sim_h \begin{pmatrix}
        x\bullet y &0\\
        0 &0
    \end{pmatrix}\sim_h \begin{pmatrix}
        y&0\\
        0&x
    \end{pmatrix} \,\,\,\,\text{in}\,\,\,\, \u_2(B).
    \]
\end{lem}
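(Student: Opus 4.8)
The plan is to prove a quasi-unitary analogue of the classical Whitehead lemma by transporting the standard rotation-homotopy argument through the correspondence between quasi-unitaries in $B$ and genuine unitaries in the unitization $B^+$. Recall from \cite[Lemma 1.2]{thomsen} that $x \in \u(B)$ if and only if $1-x \in U(B^+)$, and that this correspondence is multiplicative in the precise sense that $1 - (x \bullet y) = (1-x)(1-y)$, which follows directly from the definition $x \bullet y = x + y - xy$. Thus conjugating the problem into $B^+$ turns the $\bullet$-operation into ordinary multiplication, and the block quasi-unitary $\operatorname{diag}(x,y)$ corresponds to the block unitary $\operatorname{diag}(1-x, 1-y)$ in $M_2(B^+) = M_2(B)^+$ (using that the $2\times 2$ identity is the unit).

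First I would record the algebraic identity $1-(x\bullet y) = (1-x)(1-y)$ and note that a path $t \mapsto w(t)$ in $\u_2(B)$ corresponds to a path $t\mapsto 1 - w(t)$ in $U_2(B^+)$, with $w(t) \sim_h 0$ in $\u_2(B)$ exactly when $1 - w(t) \sim_h \mathbf{1}_2$ in $U_2(B^+)$; this is because the base point $0$ of $\u_2(B)$ corresponds to the identity of $U_2(B^+)$ and the operations match up. So it suffices to establish the three homotopies for the corresponding block unitaries $\operatorname{diag}(u,v)$, $\operatorname{diag}(uv,1)$, and $\operatorname{diag}(v,u)$ in $U_2(B^+)$, where I set $u = 1-x$ and $v = 1-y$. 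Second, I would invoke the standard rotation homotopy underlying Whitehead's lemma (exactly as in \cite[Lemma 2.2.5]{rordam2000introduction}): the path
\[
R_t = \begin{pmatrix} \cos(\tfrac{\pi t}{2}) & -\sin(\tfrac{\pi t}{2}) \\ \sin(\tfrac{\pi t}{2}) & \cos(\tfrac{\pi t}{2}) \end{pmatrix}, \qquad t\in[0,1],
\]
is a continuous path of unitaries in $M_2(\mathbb{C}) \subseteq M_2(B^+)$ from $\mathbf{1}_2$ to the flip, and the conjugations $t \mapsto \operatorname{diag}(u,1)\, R_t\, \operatorname{diag}(1,u)\, R_t^{*}$ and similar rotations give the required continuous deformations. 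Concretely this yields $\operatorname{diag}(u,v) \sim_h \operatorname{diag}(uv, 1) \sim_h \operatorname{diag}(v,u)$ in $U_2(B^+)$.

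The one point requiring care, and the main obstacle, is that the intermediate unitaries produced by the rotation homotopy must genuinely land back in $\u_2(B)$ under the inverse correspondence $w \mapsto 1 - w$; that is, the homotopy must stay within the image of $\u_2(B)$ rather than wandering into $U_2(B^+)$ outside it. This is automatic because each conjugate of a block-diagonal unitary of the form $\operatorname{diag}(u,1)$ or $\operatorname{diag}(u,v)$ by a scalar rotation $R_t \in M_2(\mathbb{C})$ has entries differing from a scalar matrix only by elements of $B$, so $1 - (\text{conjugate})$ lies in $M_2(B)$ and hence is a quasi-unitary; one checks this by expanding the products and observing that all non-scalar contributions come from $u - 1 = -x \in B$ and $v - 1 = -y \in B$. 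I would verify this membership explicitly for the path endpoints and note that it holds along the whole path by the same entrywise reasoning, thereby confirming that pushing the $U_2(B^+)$-homotopies back through $w \mapsto 1-w$ produces honest homotopies in $\u_2(B)$ and completes the proof.
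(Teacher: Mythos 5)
Your proof is correct and takes essentially the same approach as the paper: both pass to the unitization via $w \mapsto 1-w$, run the scalar rotation homotopy underlying Whitehead's lemma inside $U(M_2(B^+))$, and verify that the scalar part of every unitary along the path remains the identity, so that the path pulls back to one in $\u_2(B)$. The differences are organizational rather than substantive---the paper establishes only the single flip homotopy $\operatorname{diag}(x,0) \sim_h \operatorname{diag}(0,x)$ this way and then deduces both claimed homotopies algebraically from the group operation $\bullet$, whereas you run the full classical Whitehead argument upstairs; also, your parenthetical identification $M_2(B^+) = M_2(B)^+$ is not literally true (the former properly contains the latter), and there is a small typo ($\operatorname{diag}(1,u)$ where $\operatorname{diag}(1,v)$ is meant), but both are harmless precisely because of the scalar-part membership check you carry out.
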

\begin{proof}
    Let $H\colon[0,1]\to U(M_2(\mathbb{C}))\subseteq U(M_2(B^+))$ be a continuous path such that 
     $H(0)=\begin{pmatrix}
        1 &0\\
        0 &1
    \end{pmatrix}$ and $H(1)=\begin{pmatrix}
        0 &1\\
        1 &0
    \end{pmatrix}$. Define  $F\colon[0,1]\to U(M_2(B^+))$ as
    \[
    F(t) := H(t)\begin{pmatrix}
        1-x&0\\
        0&1
    \end{pmatrix}H(t)^*.
    \]
It is then straightforward to see that the scalar part of $F(t)$ is 
$
\begin{pmatrix}
1 & 0 \\[2pt]
0 & 1
\end{pmatrix}.
$
Consequently, for each $t\in[0,1]$, the scalar part of  $1_{M_2(B^+)} - F(t)$ is equal to zero, and thus lies in
\[
\bigl(1_{M_2(B^+)} - U(M_2(B^+))\bigr) \cap M_2(B) = \u_2(B).
\]

The map $t \mapsto 1_{M_2(B^+)} - F(t)$ therefore defines a continuous path of quasi-unitaries in $M_2(B)$ connecting
\[
\begin{pmatrix}
x & 0 \\[2pt]
0 & 0
\end{pmatrix}
\quad\text{and}\quad
\begin{pmatrix}
0 & 0 \\[2pt]
0 & x
\end{pmatrix}.
\]

Hence \[\begin{pmatrix}
        x &0\\
        0 &y
    \end{pmatrix}= \begin{pmatrix}
        x &0\\
        0&0
    \end{pmatrix}\bullet \begin{pmatrix}
        0 & 0\\
        0& y
    \end{pmatrix}\sim_h \begin{pmatrix}
         x &0\\
        0&0
    \end{pmatrix}\bullet \begin{pmatrix}
        y &0\\
        0&0
    \end{pmatrix}= \begin{pmatrix}
        x\bullet y &0\\
        0 &0
    \end{pmatrix}.\]
    The second homotopy follows by similar arguments. 
\end{proof}

The next lemma allows us to show $K$-stability for a class of C*-algebras that is closed under matrix amplifications and tensoring with $C(S^m)$, by reducing the problem to showing that $k_0(\iota_2,\cdot)$ is an isomorphism.

 \begin{lem}\label{lem_first}
     Let $B$ be a C*-algebra and let $\mathcal{C}$ be a class of C*-algebras such that for any $A\in \mathcal{C}$, the map
     \[
     k_0(\iota_{2,A\otimes B})\colon\pi_0(\u(A\otimes B))\rightarrow\pi_0(\u_2(A\otimes B))
     \]
     is an isomorphism. If $\mathcal{C}$ is closed under matrix amplifications and taking tensor products with $C(S^m)$ for all $m\in\N$, then $A\otimes B$ is $K$-stable for any  $A\in \mathcal{C}$.
 \end{lem}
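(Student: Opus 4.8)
The plan is to deduce the full strength of $K$-stability for $A\otimes B$—that is, that $k_m(\iota_{n,A\otimes B})$ is an isomorphism for all $m\geq 0$ and all $n\geq 2$ in the sense of Definition~\ref{Def_K_stable}—from the single hypothesis on $k_0(\iota_{2,A'\otimes B})$ by carrying out two essentially independent reductions. First I would use closure under matrix amplifications to reduce the arbitrary block size $n$ to the doubling map $n=2$; then I would use closure under $\otimes C(S^m)$, together with the natural isomorphism $k_m(\,\cdot\,)\cong\pi_0(\u(S^m\,\cdot\,))$, to reduce the arbitrary degree $m$ to degree $0$. Combining the two reductions collapses everything onto the hypothesis.

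For the reduction in $n$, fix a C*-algebra $C$ and write $j_k\colon M_k(C)\to M_{2k}(C)$ for the doubling $a\mapsto\mathrm{diag}(a,0)$, noting that $j_k=\iota_{2,M_k(C)}$ and that, for every $n\geq 2$, the composite of the $(n-1)$ successive inclusions $\iota_{2(n-1),C}\circ\cdots\circ\iota_{n+1,C}\circ\iota_{n,C}$ equals $j_{n-1}$ (padding by $n-1$ zeros, which is unambiguous up to homotopy by Lemma~\ref{whiteheads}). I would first record the purely formal statement that \emph{if $k_0(j_k)$ is an isomorphism for every $k\geq 1$, then $k_0(\iota_{n,C})$ is an isomorphism for every $n\geq 2$}: since $k_0(\iota_{n,C})$ is the first factor of the injective composite $k_0(j_{n-1})$, each $k_0(\iota_{n,C})$ is injective; and then every $k_0(j_{n-1})$, being a bijective composite of injective group homomorphisms, forces each of its factors to be bijective, so in particular each $k_0(\iota_{n,C})$ is bijective. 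Applying this with $C=A\otimes B$, and using that $M_k(A)\in\mathcal{C}$ makes $k_0(j_k)=k_0(\iota_{2,M_k(A)\otimes B})$ an isomorphism by hypothesis, already settles the case $m=0$.

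For the reduction in $m$, I would invoke the naturality of $k_m(\,\cdot\,)\cong\pi_0(\u(S^m\,\cdot\,))$ and the identity $S^m M_k(\,\cdot\,)\cong M_k(S^m\,\cdot\,)$ to identify $k_m(\iota_{n,A\otimes B})$ with $k_0(\iota_{n,S^m(A\otimes B)})$, where $S^m(A\otimes B)\cong C_*(S^m)\otimes A\otimes B$. By the reduction in $n$ (now applied with $C=S^m(A\otimes B)$), it suffices to show that the doublings are isomorphisms on $\pi_0(\u(M_k(S^m(A\otimes B))))$ for all $k$. The hard part will be that $\mathcal{C}$ is closed under $\otimes C(S^m)$, the \emph{unreduced} sphere, rather than under the reduced suspension $C_*(S^m)$; I would bridge this gap using the split short exact sequence
\[
0\to C_*(S^m)\otimes M_k(A\otimes B)\to C(S^m)\otimes M_k(A\otimes B)\xrightarrow{\ \mathrm{ev}_*\ } M_k(A\otimes B)\to 0,
\]
evaluation at the base point, split by the inclusion of constants. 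Applying the homology theory $k_*$ and using that the section forces the connecting maps to vanish produces split short exact sequences that are natural with respect to the doubling maps (the groups are abelian at matrix level by Lemma~\ref{whiteheads}, so the five lemma applies without friction). In the resulting ladder the outer columns are isomorphisms—the $C(S^m)$-column because $C(S^m)\otimes M_k(A)\in\mathcal{C}$, and the quotient column because $M_k(A)\in\mathcal{C}$, both by hypothesis—so the five lemma forces the middle ($C_*(S^m)$) column to be an isomorphism. That is precisely the required doubling isomorphism for $S^m(A\otimes B)$.

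Feeding this back through the reduction in $n$ shows $k_0(\iota_{n,S^m(A\otimes B)})$, and hence $k_m(\iota_{n,A\otimes B})$, is an isomorphism for all $n\geq 2$; together with the case $m=0$ this establishes $K$-stability of $A\otimes B$ for every $A\in\mathcal{C}$. The only genuinely delicate point is the reduced-versus-unreduced bridge, where I must extract split exactness from the half-exactness and homotopy invariance of the functor $\pi_0(\u(\,\cdot\,))$ via the vanishing of the connecting maps in the presence of a section; everything else is formal manipulation of the inclusion maps.
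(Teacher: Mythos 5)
Your proposal is correct and takes essentially the same route as the paper's own proof: both reduce everything to the hypothesis via the closure under matrix amplifications (a two-out-of-six style argument on composites of the inclusions, identifying iterated inclusions with doubling maps), and both handle higher degrees $m$ through the natural isomorphism $\pi_m(\u_n(\cdot))\cong\pi_0(\u(C_*(S^m)\otimes M_n(\cdot)))$ together with a five-lemma chase on the ladder of split short exact sequences relating $C_*(S^m)\otimes M_n(A\otimes B)$, $C(S^m)\otimes M_n(A\otimes B)$ and $M_n(A\otimes B)$, using closure under tensoring with $C(S^m)$. The differences are purely presentational: you isolate the matrix reduction as a standalone formal step and are more explicit than the paper about why the section forces split exactness of the $k_0$-sequences.
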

 \begin{proof}
     Let $A\in\mathcal{C}$, $m,n\in \N$. We shall show that 
     \[
     k_m(\iota_{{n+1},A\otimes B})\colon \pi_m(\u_n(A\otimes B))\rightarrow \pi_m(\u_{n+1}(A\otimes B))
     \]
     is an isomorphism.

For this, we claim that it suffices to show that
\begin{equation}\label{eq_lem_first}
    k_0(\iota_{n+1,A\otimes B})\colon \pi_0(\u_{n}(A\otimes B))\rightarrow \pi_0(\u_{n+1}(A\otimes B))
    \end{equation}
    is an isomorphism for all $n\geq 1$ and $A\in \mathcal{C}$. Since $C:=C(S^m)\otimes A$ belongs to $\mathcal{C}$, the above claim implies that the map 
    \[
    k_0(\iota_{{{n+1},C\otimes B}})\colon  \pi_0(\u_n(C(S^m)\otimes A\otimes B))\rightarrow \pi_0(\u_{n+1}(C(S^m)\otimes A\otimes B))
    \]
    is an isomorphism for all $m\geq 0$, $n\geq 1$ and $A\in \mathcal{C}$.
     Hence,
     the map 
\[
k_m(\iota_{{n+1},A\otimes B})\colon\pi_m(\u_{n}(A\otimes B)) \rightarrow \pi_m(\u_{n+1}(A\otimes B))
\]
is an isomorphism, following from the naturality of the isomorphism  
\[
\pi_0(\u(C_*(S^m)\otimes M_n(A\otimes B)))\cong \pi_m(\u_n(A\otimes B))
\]
and the commutative diagram below:

\begin{tikzcd}[column sep=0.7em, row sep=1.8em]
0 \arrow{r} & \pi_0(\u(C_*(S^m)\otimes M_{n}(A\otimes B)))\arrow{r}\arrow{d} & \pi_0(\u(C(S^m)\otimes M_{n}(A\otimes B)))\arrow{r}\arrow{d}{k_0(\iota_{n+1,C\otimes B})}&\pi_0(\u_{n}(A\otimes B))\arrow {r} \arrow{d}{k_0(\iota_{n+1,A\otimes B})}&0\\
0\arrow{r} & \pi_0(\u(C_*(S^m)\otimes M_{n+1}(A\otimes B))) \arrow{r} & \pi_0(\u(C(S^m)\otimes M_{n+1}(A\otimes B)))\arrow{r} & \pi_0(\u_{n+1}(A\otimes B))\arrow{r} & 0.
\end{tikzcd}

    What remains to show is the above claim. That is, we now show that \[k_0(\iota_{n+1,A\otimes B})\colon \pi_0(\u_{n}(A\otimes B))\rightarrow \pi_0(\u_{n+1}(A\otimes B))\] is an isomorphism, for any $n\in\N$. 
    For this, see that $M_n(A), M_{n+1}(A)\in \mathcal{C}$. Hence, the maps 
     \begin{equation}
         \begin{split}
             k_0\left({\iota_{2,M_n(A\otimes B)}}\right)\colon \pi_0(\u_n(A\otimes B)) &\xrightarrow{\cong} \pi_0(\u_{2n}(A\otimes B))\,\,\text{and},\\
             k_0\left({\iota_{2,M_{n+1}(A\otimes B)}}\right)\colon \pi_0(\u_{n+1}(A\otimes B)) &\xrightarrow {\cong}\pi_0(\u_{2n+2}(A\otimes B))
         \end{split}
     \end{equation}
     are isomorphisms.
     Hence, we get a  commutative diagram
     \[
\begin{tikzcd}[column sep=4em]
    \pi_0(\u_n(A\otimes B))
        \arrow[r, "k_0(\iota_{{n+1},A\otimes B})"]
        \arrow[rr, bend right=20,
            "\cong",
            "k_0\!\left({\iota_{2,M_n(A\otimes B)}}\right)"' below]
    &
    \pi_0(\u_{n+1}(A\otimes B))
        \arrow[r, "k_0(\iota_{2n,{n+1}})"]
        \arrow[rr, bend right=20,
            "\cong",
            "k_0\!\left({\iota_{2,M_{n+1}(A\otimes B)}}\right)"' below]
    &
    \pi_0(\u_{2n}(A\otimes B))
        \arrow[r, "k_0(\iota_{2n+2,{2n}})"]
    &
    \pi_0(\u_{2n+2}(A\otimes B)),
\end{tikzcd}
\]
where $\iota_{2n,{n+1}}$ is the composition \[\iota_{2n,A\otimes B}\circ\ldots\circ\iota_{n+2, A\otimes B}\colon M_{n+1}(A\otimes B)\rightarrow M_{2n}(A\otimes B),\] and similarly $\iota_{2n+2,2n}$ is the composition \[\iota_{2n+2,A\otimes B}\circ\ldots\circ\iota_{2n+1,A\otimes B}\colon M_{2n}(A\otimes B)\rightarrow M_{2n+2}(A\otimes B).\] It is then easy to see that $k_0(\iota_{{n+1},A\otimes B})$   is an isomorphism. 
\end{proof}

We now give the proof of Theorem~\ref{mainthm1}, which we divide into two parts.  First, we show that if the minimal matrix size appearing in homogeneous summands of the algebras in the AH system of a diagonal AH-algebra $A$ grows without bound, then the tensor product $A\otimes B$ is $K$-stable for any C*-algebra $B$.

\begin{prop}[Growth implies $K$-stability]\label{thm:growth-to-K}
Let 
\[
A = \varinjlim (A_i, \varphi_i)
\] 
be a diagonal AH-algebra. If
\[
\lim_{i\to\infty} d_{A_i} = \infty,
\]
then \(A \otimes B\) is \(K\)-stable for any C*-algebra \(B\).
\end{prop}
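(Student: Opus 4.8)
The plan is to apply Lemma~\ref{lem_first}, thereby reducing the entire problem to a single statement about $k_0$ and the inclusion $\iota_2$. Concretely, I would fix the C*-algebra $B$ and consider the class
\[
\mathcal{C} := \bigl\{\, C \ :\ C = \varinjlim (C_i,\psi_i) \text{ is a diagonal AH-algebra with } \lim_{i\to\infty} d_{C_i} = \infty \,\bigr\}.
\]
The first task is to verify that $\mathcal{C}$ is closed under the two operations required by Lemma~\ref{lem_first}: matrix amplification and tensoring with $C(S^m)$. For matrix amplification, note that $M_r\bigl(\bigoplus_l C(X_{i,l})\otimes M_{m_{i,l}}\bigr) = \bigoplus_l C(X_{i,l})\otimes M_{r\,m_{i,l}}$, so the minimal block size is scaled by $r$ and the growth condition is preserved (indeed strengthened); by the remark at the end of the preliminaries, the connecting maps stay diagonal with unchanged matrix representation. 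For $C(S^m)\otimes C$, I would absorb the sphere into the base spaces, rewriting $C(S^m)\otimes C(X_{i,l})\otimes M_{m_{i,l}} = C(S^m\times X_{i,l})\otimes M_{m_{i,l}}$; here $S^m\times X_{i,l}$ is again a connected compact metric space, the block sizes $m_{i,l}$ are literally unchanged, and the connecting maps are diagonal with eigenvalue functions $\mathrm{id}_{S^m}\times\lambda$. Hence the growth condition $\lim d_{C_i}=\infty$ passes verbatim to this tensor product, and $\mathcal{C}$ is closed under both operations.

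With closure established, Lemma~\ref{lem_first} tells me it suffices to prove that for every $C\in\mathcal{C}$ the map
\[
k_0(\iota_{2,C\otimes B})\colon \pi_0\bigl(\U(C\otimes B)\bigr)\longrightarrow \pi_0\bigl(\U_2(C\otimes B)\bigr)
\]
is an isomorphism. Surjectivity of $k_0(\iota_2)$ is formal: given $w\in\U_2(C\otimes B)$, Lemma~\ref{whiteheads} lets me collapse its two diagonal quasi-unitary ``coordinates'' into a single block, so up to homotopy $w$ lies in the image of $\iota_2$. The genuine content is therefore \emph{injectivity}: I must show that if $a\in\U(C\otimes B)$ satisfies $\mathrm{diag}(a,0)\sim_h 0$ in $\U_2(C\otimes B)$, then already $a\sim_h 0$ in $\U(C\otimes B)$. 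This is precisely where the growth hypothesis and the diagonal structure must enter.

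The strategy for injectivity is to exploit continuity of $k_0$ together with the diagonal eigenvalue functions. Since $C\otimes B = \varinjlim (C_i\otimes B,\, \psi_i\otimes\mathrm{id}_B)$ and $k_0$ is continuous (it commutes with inductive limits, as noted after the definition of the nonstable $K$-groups), any quasi-unitary $a$ and any homotopy witnessing $\mathrm{diag}(a,0)\sim_h 0$ can be approximated by, and ultimately traced back to, data living at some finite stage $C_r\otimes B$. The key geometric input is that a block $C(X_{r,l})\otimes M_{m_{r,l}}(B)$ maps under the connecting homomorphism into blocks of $C_p$ by way of the diagonal formula $f\mapsto \mathrm{diag}(f\circ\lambda_1,\dots,f\circ\lambda_{k})$, where the multiplicity $k$ is at least $d_{C_p}/d_{C_r}$ and hence tends to $\infty$ by the growth condition. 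Thus a class represented at stage $r$, after pushing forward to a sufficiently large stage $p$, appears as a diagonal sum of $k$ identical ``pulled-back'' copies with $k$ large. Using Lemma~\ref{homotopy_lem} to transport homotopies along the eigenvalue maps $\lambda_j$, and Lemma~\ref{whiteheads} to amalgamate and rotate the resulting diagonal copies, I would argue that a single copy of $a$ becomes homotopically trivial once enough copies are available—converting the hypothesis $\mathrm{diag}(a,0)\sim_h 0$ at stage $p$ into $a\sim_h 0$ at that stage, and hence in the limit.

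The main obstacle I anticipate is the injectivity step, specifically making rigorous the passage from ``$\mathrm{diag}(a,0)$ is trivial'' to ``$a$ is trivial'' using large multiplicity. The delicate point is that the $k$ diagonal copies arising from the connecting map are \emph{pullbacks} $a\circ\lambda_j$ under possibly distinct eigenvalue maps $\lambda_j\colon X_{p}\to X_{r}$, not literal repetitions of a fixed quasi-unitary; I will need Lemma~\ref{homotopy_lem} to control each pulled-back copy and ensure that a trivialization of the amalgamated sum descends to a trivialization of one copy. Managing this uniformly across the finitely many summands indexed by $l$, and across the approximation-to-finite-stage argument enabled by continuity of $k_0$, is where the bulk of the technical care will be required.
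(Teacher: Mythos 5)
Your reduction via Lemma~\ref{lem_first} and the closure of the class $\mathcal{C}$ under matrix amplification and tensoring with $C(S^m)$ matches the paper exactly, and your overall plan (trace classes back to a finite stage, push forward along the diagonal connecting maps, use Lemmas~\ref{homotopy_lem} and~\ref{whiteheads}) is the right one. However, there are two genuine gaps. First, surjectivity of $k_0(\iota_{2,C\otimes B})$ is \emph{not} formal. A general quasi-unitary $w\in\u_2(C\otimes B)$ is a $2\times 2$ matrix over $C\otimes B$, not a diagonal matrix $\mathrm{diag}(x,y)$ with $x,y\in\u(C\otimes B)$, so Lemma~\ref{whiteheads} simply does not apply to it; indeed $\pi_0(\u_1)\to\pi_0(\u_2)$ fails to be surjective already for $C(S^3)$, where $\pi_0(U_1(C(S^3)))=[S^3,U(1)]=0$ while $\pi_0(U_2(C(S^3)))\cong\pi_3(U(2))\cong\mathbb{Z}$. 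Surjectivity is exactly where the growth hypothesis must be used a second time: in the paper, a class in $\u_2$ at stage $r$ is pushed to a stage $p$ with $d_{A_p}\geq 2(m_1+\cdots+m_n)$; there its image \emph{is} a diagonal sum of pulled-back blocks (because the connecting map is diagonal), Whitehead amalgamates each group of blocks into a single block $u^j_{n_\beta}$ of size $2m_{n_\beta}$, and the dimension bound is precisely what allows this diagonal sum, padded by zeros, to be rewritten as $\iota_2$ of an element of $\u(C(Y_j)\otimes M_{p_j}(B))$.

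Second, your injectivity mechanism rests on the claim that the multiplicity $k$ with which a block of $C_r$ maps into a block of $C_p$ is at least $d_{C_p}/d_{C_r}$ and hence tends to infinity. This is false: the growth condition forces the \emph{total} size $p_j=\sum_i k_{ij}m_i$ to be large, but an individual multiplicity $k_{ij}$ can remain bounded, and in particular equal to $1$, at every stage (e.g.\ $2k_{1j}+7k_{2j}=p_j$ admits $k_{1j}=1$ with $k_{2j}$ large). This matters because the Whitehead amalgamation trick needs at least one adjacent zero block of the right size: for a summand mapping with multiplicity $k\geq 2$ one gets $k-1$ spare zero blocks for free, but a multiplicity-one summand contributes a single pulled-back copy $v_i\circ\lambda$ with no room next to it, and the hypothesis $\mathrm{diag}(v_i,0_{m_i})\sim_h 0$ cannot be applied inside a block of size $m_i$. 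The paper resolves exactly this by splitting the summands into those of multiplicity $1$ and those of multiplicity $\geq 2$, and by choosing $p$ so that $d_{A_p}\geq 3(m_1+\cdots+m_n)$; the factor $3$ guarantees that the spare zero blocks generated by the multiplicity-$\geq 2$ summands are large enough to be redistributed (by Lemma~\ref{whiteheads}) next to the multiplicity-one copies. Without this splitting-and-borrowing step, your argument does not go through as sketched.
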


\begin{proof}
Let $B$ be a C*-algebra and let $\mathcal{C}$ denote the class of diagonal AH-algebras  $A=\lim\limits_{i\to\infty}(A_i,\phi_i)$, such that  $\lim\limits_{i \to \infty} d_{A_i} = \infty$. Then, for any $A \in \mathcal{C}$, the algebras $C(S^m)\otimes A$ and $M_n(A)$ also belong to $\mathcal{C}$ for all $n,m \in \mathbb{N}$. Thus, by Lemma~\ref{lem_first}, it suffices to show that
    
    \begin{equation}\label{iso_eqn}
   k_0(\iota_{2,A\otimes B})\colon \pi_0(\u(A\otimes B))\rightarrow \pi_0(\u_2(A\otimes B)) 
    \end{equation}
    is an isomorphism.

 We first consider the injectivity of the map in ~\eqref{iso_eqn}. The idea is that the matrix amplification of an element at a finite stage is homotopic to zero may not imply it is homotopic to zero itself. However, by passing to a sufficiently large stage in the inductive system, one can arrange that its image becomes homotopic to zero. The details are as follows.

Since the connecting maps are diagonal, using Lemma~\ref{whiteheads}, we see that the following diagram is commutative.
\[
\xymatrix@C=3em{
\ldots\ar[r] & \pi_0(\u(A_i\otimes B))\ar[rr]^{k_0(\phi_i\otimes\,\text{id}_B)}\ar[d]_{k_0(\iota_{2, A_{i}\otimes B})} && \pi_0(\u(A_{i+1}\otimes B))\ar[d]^{k_0(\iota_{2, A_{i+1}\otimes B})}\ar[r] & \ldots \\
\ldots\ar[r] &  \pi_0(\u_2(A_i\otimes B))\ar[rr]_{k_0(\phi_i\otimes\, \text{id}_B\otimes \text{id}_{M_2})} && \pi_0(\u_2(A_{i+1}\otimes B))\ar[r] & \ldots
}
\]
Hence, by continuity of $k_0$, it suffices to show the following:
Let $[v]\in \pi_0(\u(A_r\otimes B))$ for some $r\in\N$, such that 
\[
k_0(\iota_{2, A_{r}\otimes B})([v])=\bigg[\begin{pmatrix}
    v&0\\
    0&0
\end{pmatrix}\bigg]=0\in \pi_0(\u_2(A_r\otimes B)).
\]
Then there exists some $p\geq r\in\N$, such that \[k_0(\phi_{p,r}\otimes \text{id}_B)([v])=0\in\pi_0(\u(A_p\otimes B)).\]
To this end, let $A_r=\bigoplus_{i=1}^n C(X_i)\otimes M_{m_i}$, and let $[v_1]\oplus [v_2]\oplus\dots\oplus [v_{n}]$ be the image of $[v]$ under the isomorphism 
\[
\pi_0(\u(A_r\otimes B))\cong \bigoplus_{i=1}^n \pi_0(\u(C(X_i)\otimes M_{m_i}(B))).
\]
From the commutative diagram 
\[ \begin{tikzcd}
\pi_0(\u(A_r\otimes B)) \arrow{r}{\cong} \arrow[swap]{d}{\iota_r^*} & \bigoplus_{i=1}^n \pi_0(\u(C(X_i)\otimes M_{m_i}(B))) \arrow{d}{j^*} \\%
\pi_0(\u_{2}(A_r\otimes B)) \arrow{r}{\cong}& \bigoplus_{i=1}^n \pi_0(\u_{2}(C(X_i)\otimes M_{m_i}(B))),
\end{tikzcd}
\]
we get 
\begin{equation}\label{homotopy_eqn}
\bigg[\begin{pmatrix}
    v&0\\
    0&0
\end{pmatrix}\bigg]=0=\bigg[\begin{pmatrix}
    v_1&0\\
    0&0_{m_1}
\end{pmatrix}\bigg]\oplus \bigg[\begin{pmatrix}
    v_2&0\\
    0&0_{m_2}
\end{pmatrix}\bigg]\oplus\dots\oplus\bigg[\begin{pmatrix}
    v_n&0\\
    0&0_{m_n}
\end{pmatrix}\bigg].
\end{equation}
Since, $\lim\limits_{i\to\infty}d_{A_i}=\infty$, choose $A_p=\bigoplus_{j=1}^t C(Y_j)\otimes M_{p_j}$, such that 
\begin{equation}\label{equ_mindim}
    d_{A_p}\geq 3m_1+ 3m_2 +\dots+ 3m_n.
\end{equation}
Let
\[
k_0(\phi_{p,r}\otimes \text{id}_B)([v_1]\oplus [v_2]\oplus\dots\oplus [v_n])=[w_1]\oplus [w_2]\oplus\dots\oplus [w_t]\in \pi_0(\u(A_p\otimes B)).
\]
Since, $\phi_{p,r}\colon A_r\rightarrow A_p$ is diagonal, let 
    \[
\begin{pmatrix}
    (\lambda_{1}^{11}, \lambda_2^{11}, \dots, \lambda_{k_{11}}^{11}) & (\lambda_{1}^{12}, \lambda_2^{12}, \dots, \lambda_{k_{12}}^{12}) & \dots & (\lambda_{1}^{1t}, \lambda_2^{1t}, \dots, \lambda_{k_{1t}}^{1t}) \\[6pt]
    (\lambda_{1}^{21}, \lambda_2^{21}, \dots, \lambda_{k_{21}}^{21}) & ( \lambda_{1}^{22}, \lambda_2^{22}, \dots, \lambda_{k_{22}}^{22}) & \dots & (\lambda_{1}^{2t}, \lambda_2^{2t}, \dots, \lambda_{k_{2t}}^{2t}) \\[6pt]
    \vdots & \vdots & \ddots & \vdots \\[6pt]
    (\lambda_{1}^{n1}, \lambda_2^{n1}, \dots, \lambda_{k_{n1}}^{n1}) & (\lambda_{1}^{n2}, \lambda_2^{n2}, \dots, \lambda_{k_{n2}}^{n2}) & \dots & (\lambda_{1}^{nt}, \lambda_2^{nt}, \dots, \lambda_{k_{nt}}^{nt})
\end{pmatrix}
\]
be the matrix associated to $\phi_{p,r}$. 
Then,
\[
[w_j]=[\text{diag}(v_1\circ \lambda_1^{1j}, \dots, v_1\circ \lambda_{k_{1j}}^{1j}, v_2\circ \lambda_1^{2j}, \dots, v_2\circ \lambda_{k_{2j}}^{2j}, \dots, v_n\circ \lambda_1^{nj}, \dots, v_n\circ \lambda_{k_{nj}}^{nj})].
\]

We now write the set $\{k_{1j},k_{2j},\dots,k_{nj}\}$ as follows:
\[
\{k_{1j},k_{2j},\dots,k_{nj}\}=\{k_{n_1j},k_{n_2j},\dots,k_{n_sj}, k_{r_1j},k_{r_2j},\dots,k_{r_qj}, 0,0\dots,0\}
\]
such that
\begin{enumerate}
    \item  $k_{n_1j}=k_{n_2,j}=\dots =k_{n_sj}=1$,
    \item $k_{r_1j},k_{r_2j},\dots,k_{r_qj}\geq 2$.
\end{enumerate}

Then, by the definition of a diagonal *-homomorphism (Definition~\ref{def_diagonal_maps}) and~\eqref{equ_mindim}, we can see that 
\begin{equation}\label{dim_eqn}
p_j=\sum_{\beta=1}^s m_{n_\beta} +  \sum_{\alpha=1}^q m_{r_\alpha}k_{{r_\alpha}j}\geq 3m_1+ 3m_2+\dots+ 3m_n.
\end{equation}

By repeated use of Lemma~\ref{whiteheads}, we see that
\begin{equation}\label{whitehead_equn}
\begin{split}
w_j 
\sim_h &\;
\text{diag}\big(
    v_{n_1}\!\circ\!\lambda_1^{n_1j},
    v_{n_2}\!\circ\!\lambda_1^{n_2j},
    \dots,
    v_{n_s}\!\circ\!\lambda_1^{n_sj},\\
&\quad
    v_{r_1}\!\circ\!\lambda_1^{r_1j}, \dots, v_{r_1}\!\circ\!\lambda_{k_{r_1j}}^{r_1j},\,
    v_{r_2}\!\circ\!\lambda_1^{r_2j}, \dots, v_{r_2}\!\circ\!\lambda_{k_{r_2j}}^{r_2j},\, \dots,\,
    v_{r_q}\!\circ\!\lambda_1^{r_qj}, \dots, v_{r_q}\!\circ\!\lambda_{k_{r_qj}}^{r_qj}
\big)\\[4pt]
\sim_h &\;
\text{diag}\big(
    v_{n_1}\!\circ\!\lambda_1^{n_1j},
    v_{n_2}\!\circ\!\lambda_1^{n_2j},
    \dots,
    v_{n_s}\!\circ\!\lambda_1^{n_sj},\\
&\quad
    (v_{r_1}\!\circ\!\lambda_1^{r_1j}) \bullet (v_{r_1}\!\circ\!\lambda_2^{r_1j}) \bullet \dots \bullet (v_{r_1}\!\circ\!\lambda_{k_{r_1j}}^{r_1j}),
    0_{m_{r_1}},
    \underbrace{0_{m_{r_1}}, \dots, 0_{m_{r_1}}}_{k_{r_1j}-2\,\text{ times}},\\
&\quad
    (v_{r_2}\!\circ\!\lambda_1^{r_2j}) \bullet (v_{r_2}\!\circ\!\lambda_2^{r_2j}) \bullet \dots \bullet (v_{r_2}\!\circ\!\lambda_{k_{r_2j}}^{r_2j}),
    0_{m_{r_2}},
    \underbrace{0_{m_{r_2}}, \dots, 0_{m_{r_2}}}_{k_{r_2j}-2\,\text{ times}},\\
&\quad
    \dots,\,
    (v_{r_q}\!\circ\!\lambda_1^{r_qj}) \bullet (v_{r_q}\!\circ\!\lambda_2^{r_qj}) \bullet \dots \bullet (v_{r_q}\!\circ\!\lambda_{k_{r_qj}}^{r_qj}),
    0_{m_{r_q}},
    \underbrace{0_{m_{r_q}}, \dots, 0_{m_{r_q}}}_{k_{r_qj}-2\,\text{ times}}
\big).
\end{split}
\end{equation}
By~\eqref{homotopy_eqn}, for $1\leq \a\leq q$, and $1\leq \b\leq s$,
\[
\begin{pmatrix}
    v_{r_\a}&0\\
    0& 0_{m_{r_\a}}
\end{pmatrix}\sim_h 0\in \u_2(C(X_{r_\a}\otimes M_{m_{r_\a}}(B))),
\]
and 
\[
\begin{pmatrix}
    v_{n_\b}&0\\
    0& 0_{m_{n_\b}}
\end{pmatrix}\sim_h 0\in \u_2(C(X_{r_\b}\otimes M_{m_{r_\b}}(B))).
\]
Hence, by Lemma~\ref{homotopy_lem}, for $1\leq \a\leq q$, $1\leq y\leq k_{{r_\a}j}$, and  $1\leq \b\leq s$
\begin{equation}
\begin{pmatrix}
    v_{r_\a}\circ \lambda_{y}^{r_{\a}j}&0\\
    0& 0_{m_{r_\a}}
\end{pmatrix}\sim_h 0\in \u_2(C(Y_j\otimes M_{m_{r_\a}}(B))),
\end{equation}
and 
\begin{equation}\label{eqn_first_hom}
\begin{pmatrix}
    v_{n_\b}\circ \lambda_{1}^{n_{\b}j}&0\\
    0& 0_{m_{n_\b}}
\end{pmatrix}\sim_h 0\in \u_2(C(Y_j\otimes M_{m_{n_\b}}(B))).
\end{equation}
Hence, for $1\leq \a\leq q$
\begin{equation}\label{prod_hom_eqn}
\begin{split}
\begin{pmatrix}
    v_{r_\a}\circ \lambda_{1}^{r_{\a}j}&0\\
    0& 0_{m_{r_\a}}
\end{pmatrix}\bullet \begin{pmatrix}
    v_{r_\a}\circ \lambda_{2}^{r_{\a}j}&0\\
    0& 0_{m_{r_\a}}
\end{pmatrix}\bullet\dots\bullet \begin{pmatrix}
    v_{r_\a}\circ \lambda_{k_{r_\a}j}^{r_{\a}j}&0\\
    0& 0_{m_{r_\a}}
\end{pmatrix}&\sim_h 0 \in \u_2(C(Y_j\otimes M_{m_{r_\a}}(B))),\\[6pt]
\implies \begin{pmatrix}
    (v_{r_\a}\circ \lambda_{1}^{r_{\a}j})\bullet (v_{r_\a}\circ \lambda_{2}^{r_{\a}j})\bullet\dots\bullet (v_{r_\a}\circ \lambda_{k_{r_\a}j}^{r_{\a}j}) &0\\
    0& 0_{m_{r_\a}}
\end{pmatrix}&\sim_h 0\in \u_2(C(Y_j\otimes M_{m_{r_\a}}(B))).
\end{split}
\end{equation}
Moreover, since $k_{n_\beta j}=1$, for all $1\leq \beta\leq s$, by~\eqref{dim_eqn}
\begin{equation*}
\begin{split}
\sum_{\beta=1}^s m_{n_\beta}k_{{n_\beta}j} +  \sum_{\alpha=1}^q m_{r_\alpha}k_{{r_\alpha}j}&\geq \sum_{\beta=1}^s3m_{n_\beta}+ \sum_{\alpha=1}^q 3m_{r_\alpha}\\ \implies \sum_{\alpha=1}^q m_{r_\alpha}(k_{{r_\alpha}j}-2)\geq  \sum_{\beta=1}^s m_{n_\beta}(k{_{n_\beta j}}-2) + &\sum_{\alpha=1}^q m_{r_\alpha}(k_{{r_\alpha}j}-2)\geq \sum_{\beta=1}^s m_{n_\beta}+ \sum_{\alpha=1}^q m_{r_\alpha}\geq \sum_{\beta=1}^s m_{n_\beta}.
\end{split}
\end{equation*}
This implies that the last matrix homotopic to $w_j$ in ~\eqref{whitehead_equn} has enough space to accommodate the zero matrices, $0_{m_{n_1}}, 0_{m_{n_2}}, \dots 0_{m_{n_s}} $.  Hence by~\eqref{prod_hom_eqn} and~ \eqref{eqn_first_hom}
\begin{equation*}
\begin{split}
w_j 
\sim_h &\;
\text{diag}\big(
    v_{n_1}\!\circ\!\lambda_1^{n_1j},\,
    v_{n_2}\!\circ\!\lambda_1^{n_2j},\,\dots,\,
    v_{n_s}\!\circ\!\lambda_1^{n_sj},\\
&\quad (v_{r_1}\!\circ\!\lambda_1^{r_1j}) \bullet (v_{r_1}\!\circ\!\lambda_2^{r_1j}) \bullet \dots \bullet (v_{r_1}\!\circ\!\lambda_{k_{r_1j}}^{r_1j}),
    0_{m_{r_1}}, 
    \underbrace{0_{m_{r_1}}, \dots, 0_{m_{r_1}}}_{k_{r_1j}-2\text{ times}},\\
&\quad (v_{r_2}\!\circ\!\lambda_1^{r_2j}) \bullet (v_{r_2}\!\circ\!\lambda_2^{r_2j}) \bullet \dots \bullet (v_{r_2}\!\circ\!\lambda_{k_{r_2j}}^{r_2j}),
    0_{m_{r_2}}, 
    \underbrace{0_{m_{r_2}}, \dots, 0_{m_{r_2}}}_{k_{r_2j}-2\text{ times}},\\
&\quad \dots,\,
    (v_{r_q}\!\circ\!\lambda_1^{r_qj}) \bullet (v_{r_q}\!\circ\!\lambda_2^{r_qj}) \bullet \dots \bullet (v_{r_q}\!\circ\!\lambda_{k_{r_qj}}^{r_qj}),
    0_{m_{r_q}},
    \underbrace{0_{m_{r_q}}, \dots, 0_{m_{r_q}}}_{k_{r_qj}-2\text{ times}}
\big)\\[4pt]
\sim_h &\;
\text{diag}\big(
    v_{n_1}\!\circ\!\lambda_1^{n_1j}, 0_{m_{n_1}},\,
    v_{n_2}\!\circ\!\lambda_1^{n_2j}, 0_{m_{n_2}},\, \dots,\,
    v_{n_s}\!\circ\!\lambda_1^{n_sj}, 0_{m_{n_s}},\\
&\quad (v_{r_1}\!\circ\!\lambda_1^{r_1j}) \bullet (v_{r_1}\!\circ\!\lambda_2^{r_1j}) \bullet \dots \bullet (v_{r_1}\!\circ\!\lambda_{k_{r_1j}}^{r_1j}), 0_{m_{r_1}},\\
&\quad (v_{r_2}\!\circ\!\lambda_1^{r_2j}) \bullet (v_{r_2}\!\circ\!\lambda_2^{r_2j}) \bullet \dots \bullet (v_{r_2}\!\circ\!\lambda_{k_{r_2j}}^{r_2j}), 0_{m_{r_2}}, \dots,\\
&\quad (v_{r_q}\!\circ\!\lambda_1^{r_qj}) \bullet (v_{r_q}\!\circ\!\lambda_2^{r_qj}) \bullet \dots \bullet (v_{r_q}\!\circ\!\lambda_{k_{r_qj}}^{r_qj}), 0_{m_{r_q}}, 0_*
\big)\\[4pt]
\sim_h &\; 0 
\;\in\; \u(C(Y_j)\otimes M_{p_j}(B)),
\end{split}
\end{equation*}
where $*= \sum_{\a=1}^q m_{r_\a}(k_{{r_\a}j}-2)-\sum_{\b=1}^s m_{n_\b}$. 

Thus $[w_j]=0\in\pi_0(\u(C(Y_j)\otimes M_{p_j}(B)))$. Since $j$ is arbitrary, we get \[k_0(\phi_{p,r}\otimes \text{id}_B)[v]=0\in \pi_0(\u(A_p\otimes B)).\]

Let us now show surjectivity of $k_0(\iota_{2,A\otimes B})$. The idea here is to take an element in $\pi_0(\u_2(A\otimes B))$ which comes from an element at some finite stage. This element may not have a preimage, but we can push the element to a later stage so that its image there does have a preimage. Let us make this precise. 

Let $[a]\in \pi_0(\u_2(A\otimes B))$. Then there exists some $r\in\N$ and some $[v]\in \pi_0(\u_2(A_r\otimes B))$ such that \[k_0(\phi_{r,\infty}\otimes\text{id}_B \otimes \text{id}_{M_2})([v])=[a].\] Let $A_r=\bigoplus_{i=1}^n C(X_i)\otimes M_{m_i}$, and as done in the earlier part of the proof, write 
\[[v]=[v_1]\oplus [v_2]\oplus\hdots\oplus[v_{n}]\in\pi_0(\u_2(A_r\otimes B)),\]
where each $[v_i]\in \pi_0(\u_2(C(X_i)\otimes M_{m_i}(B)))$. Since $\lim\limits_{i\rightarrow\infty}d_{A_i}=\infty$, choose $A_p=\bigoplus_{j=1}^t C(Y_j)\otimes M_{p_j}$, such that \[d_{A_p}\geq 2m_1+2m_2+\hdots+2m_n.\]Furthermore, since $\phi_{p,r}\colon A_r\rightarrow A_p$ is diagonal, let 
 \[
\begin{pmatrix}
    (\lambda_{1}^{11}, \lambda_2^{11}, \dots, \lambda_{k_{11}}^{11}) & (\lambda_{1}^{12}, \lambda_2^{12}, \dots, \lambda_{k_{12}}^{12}) & \dots & (\lambda_{1}^{1t}, \lambda_2^{1t}, \dots, \lambda_{k_{1t}}^{1t}) \\[6pt]
    ( \lambda_{1}^{21}, \lambda_2^{21}, \dots, \lambda_{k_{21}}^{21}) & ( \lambda_{1}^{22}, \lambda_2^{22}, \dots, \lambda_{k_{22}}^{22}) & \dots & ( \lambda_{1}^{2t}, \lambda_2^{2t}, \dots, \lambda_{k_{2t}}^{2t}) \\[6pt]
    \vdots & \vdots & \ddots & \vdots \\[6pt]
    (\lambda_{1}^{n1}, \lambda_2^{n1}, \dots, \lambda_{k_{n1}}^{n1}) & (\lambda_{1}^{n2}, \lambda_2^{n2}, \dots, \lambda_{k_{n2}}^{n2}) & \dots & (\lambda_{1}^{nt}, \lambda_2^{nt}, \dots, \lambda_{k_{nt}}^{nt})
\end{pmatrix}
\]
be the matrix associated to $\phi_{p,r}$. Then, $\phi_{p,r}\otimes \text{id}_{M_2}$ is again diagonal, having the same associated matrix as that of $\phi_{p,r}$. Fix $1\leq j\leq t$, and write 
\[\{k_{1j},k_{2j},\dots,k_{nj}\}=\{k_{n_1j},k_{n_2j},\dots,k_{n_sj}, 0,0\dots,0\},\]
where $k_{n_\b j}\geq 1$, for all $1\leq \b\leq s$. For  $1\leq \b\leq s$, define
\begin{equation*}
\begin{split}
    u_{n_\b}^j:=v_{n_\b}\circ \lambda_1^{n_\b j}\bullet v_{n_\b}\circ \lambda_2^{n_\b j} \bullet\hdots\bullet v_{n_\b}\circ \lambda_{k_{n_\b j}}^{n_\b j} &\in \u_2(C(Y_j)\otimes M_{m_{n_\b}}(B)).
    \end{split}
\end{equation*}
Since $p_j\geq 2m_1+2m_2+\dots +2m_n$,
\begin{equation}
    w_j:=\text{diag}(u_{n_1}^j,u_{n_2}^j,\hdots,u_{n_s}^j, 0_{l_j})
\end{equation}
belongs to $\u(C(Y_j)\otimes M_{p_j}(B))$ and
 $[w_j]\in \pi_0(\u(C(Y_j)\otimes M_{p_j}(B)))$, where $l_j=p_j-\sum_{\b=1}^s 2m_{n_\b}$. Thus, we can define an element 
 \[[w]=[w_1]\oplus [w_2]\oplus\hdots\oplus [w_t]\in \pi_0(\u(A_p\otimes B)).\]
 
 Now, it suffices to show that \[k_0(\iota_{2,A_p\otimes B})([w])=k_0(\phi_{p,r}\otimes \text{id}_B \otimes \text{id}_{M_2})([v]).\] Let \[k_0(\iota_{2,A_p\otimes B})([w])=[x_1]\oplus [x_2]\oplus \hdots \oplus [x_t],\] where $[x_j]=\bigg[\begin{pmatrix}
    w_j &0\\
    0 &0_{p_j}
\end{pmatrix}\bigg]$, 
and \[k_0(\phi_{p,r}\otimes \text{id}_B\otimes \text{id}_{M_2})([v])=[y_1]\oplus [y_2]\oplus \hdots \oplus [y_t]\] where
\[
[y_j]=[\text{diag}(v_{n_1}\circ \lambda_1^{n_1j}, \dots, v_{n_1}\circ \lambda_{k_{n_1j}}^{n_1j}, v_{n_2}\circ \lambda_1^{n_2j}, \dots, v_{n_2}\circ \lambda_{k_{n_2j}}^{n_2j}, \dots, v_{n_s}\circ \lambda_1^{n_sj}, \dots, v_{n_s}\circ \lambda_{k_{n_sj}}^{n_sj})].
\]
Then again by Lemma~\ref{whiteheads}
\begin{equation}
y_j \sim_h 
\text{diag}\big(
    u_{n_1}^j, 
    \underbrace{0_{2m_{n_1}}, \hdots, 0_{2m_{n_1}}}_{k_{n_1j}-1\,\text{ times}},\;
    u_{n_2}^j, 
    \underbrace{0_{2m_{n_2}}, \hdots, 0_{2m_{n_2}}}_{k_{n_2j}-1\,\text{ times}},\;
    \hdots,\;
    u_{n_s}^j,
    \underbrace{0_{2m_{n_s}}, \hdots, 0_{2m_{n_s}}}_{k_{n_sj}-1\,\text{ times}}
\big).
\end{equation}
 Since $p_j= \sum_{\b=1}^s m_{n_\b}k_{n_\b j}$, we see that
\begin{equation}
\begin{split}
l_j + p_j 
&= 2p_j - \sum_{\beta=1}^s 2 m_{n_\beta} \\
&= (k_{n_1 j} - 1)\, 2 m_{n_1} + (k_{n_2 j} - 1)\, 2 m_{n_2} + \dots + (k_{n_s j} - 1)\, 2 m_{n_s}.
\end{split}
\end{equation}
   Hence, $[y_j]=[x_j]$ for $1\leq j\leq t$, showing that \[k_0(\iota_{2,A_p\otimes B})[w]=k_0(\phi_{p,r}\otimes \text{id}_B\otimes \text{id}_{M_2})[v].\] Finally, 
   \begin{equation}
\begin{split}
[a] 
&= k_0(\phi_{\infty,r} \otimes \mathrm{id}_B\otimes \text{id}_{M_2})([v]) \\
&= k_0(\phi_{\infty,p}\otimes \mathrm{id}_B\otimes \text{id}_{M_2}) \circ k_0(\phi_{p,r} \otimes \mathrm{id}_B\otimes \text{id}_{M_2})([v]) \\
&= k_0(\phi_{\infty,p} \otimes \mathrm{id}_B\otimes \text{id}_{M_2}) \circ k_0(\iota_{2, A_p \otimes B})([w]) \\
&= k_0(\iota_{2, A \otimes B}) \Big( k_0(\phi_{\infty,p} \otimes \mathrm{id}_B)([w]) \Big).
\end{split}
\end{equation}

    Thus $[a]$ has a preimage showing surjectivity of $k_0(\iota_{2, A\otimes B})$.
\end{proof}

Next, we show that the converse of Proposition~\ref{thm:growth-to-K} also holds true. 
For this, we first require the following lemma.
\begin{lem}\label{lem:homogeneous-quotient}
Let 
\[
A = \varinjlim (A_i, \varphi_i)
\] 
be a diagonal AH-algebra, and suppose 
\[
\lim_{i\to\infty} d_{A_i} < \infty.
\]
Then $A$ has a homogeneous quotient of the form $C(Y) \otimes M_L$, for some nonempty compact Hausdorff space $Y$ and some $L\in\mathbb{N}$.
\end{lem}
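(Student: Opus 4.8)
The plan is to extract the quotient by tracking the summands of minimal matrix size through the inductive system. First I would observe that the sequence $(d_{A_i})_i$ is non-decreasing: if $C(X_{i+1,j})\otimes M_{p_j}$ is any summand of $A_{i+1}$, then unitality of the $j$-th component of $\varphi_i$ forces $p_j=\sum_{l} k_{lj}\, m_{i,l}$, and since this summand is nonzero at least one $k_{lj}\geq 1$, whence $p_j\geq m_{i,l}\geq d_{A_i}$; taking the minimum over $j$ gives $d_{A_{i+1}}\geq d_{A_i}$. As $(d_{A_i})_i$ is a non-decreasing sequence of positive integers with finite limit, it is eventually constant, say $d_{A_i}=L$ for all $i\geq i_0$.

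The key structural observation is a rigidity at the minimal level. For $i\geq i_0$ set $S_i=\{\,l : m_{i,l}=L\,\}$, which is nonempty since the minimum $L$ is attained. If $j\in S_{i+1}$, then $L=p_j=\sum_l k_{lj}m_{i,l}$ with every $m_{i,l}\geq L$; hence exactly one source index $l$ has $k_{lj}\neq 0$, and for that index $k_{lj}=1$ and $m_{i,l}=L$, i.e.\ $l\in S_i$. Thus all summands of $A_i$ other than this distinguished $l$ map to zero in the $j$-th summand, and the $j$-th component of $\varphi_i$ is, by Definition~\ref{def_diagonal_maps}, a single composition-with-$\lambda$ map $\mu^{(j)}_i\colon C(X_{i,l})\otimes M_L\to C(X_{i+1,j})\otimes M_L$, $g\mapsto g\circ\lambda$. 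Recording $l$ as $\sigma_i(j)$ defines maps $\sigma_i\colon S_{i+1}\to S_i$ of nonempty finite sets.

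Next I would produce a thread through this inverse system. Since a sequential inverse system of nonempty finite sets has nonempty inverse limit (König's lemma, via the stabilising eventual images), there is a sequence $(l_i)_{i\geq i_0}$ with $l_i\in S_i$ and $\sigma_i(l_{i+1})=l_i$; note a naive forward choice need not work, as $\sigma_i$ need not be surjective, so this compactness step is genuinely needed. Along the thread, let $q_i\colon A_i\to C(X_{i,l_i})\otimes M_L$ be the projection onto the $l_i$-th summand and $\mu_i:=\mu^{(l_{i+1})}_i$ the associated composition map (with eigenvalue function $\lambda_i\colon X_{i+1,l_{i+1}}\to X_{i,l_i}$). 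The rigidity above gives $q_{i+1}\circ\varphi_i=\mu_i\circ q_i$, so the surjections $q_i$ are compatible with the connecting maps and pass to a $*$-homomorphism $q_\infty\colon A\to \varinjlim\big(C(X_{i,l_i})\otimes M_L,\mu_i\big)$. Its image is a C*-subalgebra containing the canonical image of every $C(X_{i,l_i})\otimes M_L$ in the limit; as these have dense union, $q_\infty$ is surjective.

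Finally I would identify the target. Because $M_L$ is finite-dimensional, tensoring commutes with the inductive limit, so $\varinjlim\big(C(X_{i,l_i})\otimes M_L,\mu_i\big)\cong\big(\varinjlim(C(X_{i,l_i}),\lambda_i^*)\big)\otimes M_L$, and by Gelfand duality the commutative limit is $C(Y)$ with $Y=\varprojlim(X_{i,l_i},\lambda_i)$. As an inverse limit of nonempty compact Hausdorff spaces, $Y$ is nonempty, giving the desired homogeneous quotient $C(Y)\otimes M_L$. The main obstacle is the rigidity step: one must see that minimality of the matrix size rules out all multiplicities and forces the single-eigenvalue, single-source structure that makes the projections compatible; once this is in hand, the thread and inverse-limit arguments are routine.
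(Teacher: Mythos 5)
Your proposal is correct and follows essentially the same approach as the paper: the same arithmetic rigidity (a size-$L$ summand of $A_{i+1}$ receives from exactly one summand of $A_i$, which must itself have size $L$ and multiplicity one, via a single eigenvalue function), the same thread of minimal summands with compatible projections, and the same identification of the resulting quotient as $C(Y)\otimes M_L$ with $Y = \varprojlim X_{i,l_i}$ nonempty compact Hausdorff. The only divergence is how the infinite thread is produced: you apply nonemptiness of inverse limits of nonempty finite sets to the system $(S_i,\sigma_i)$, whereas the paper argues by contradiction, tracing a minimal summand backwards from a sufficiently late stage and then asserting the existence of the sequence; your compactness argument is a clean justification of precisely the step the paper leaves as ``easy to see.''
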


\begin{proof}
Here, we employ ideas similar to those used in \cite[section 5]{seth2023}. Since the connecting maps are injective, $d_{A_i}$ is an increasing sequence of natural numbers, and we may very well assume that there exists some $L\in\mathbb{N}$ such that $d_{A_i}=L$ for all $i\geq 1$. Let
\[
A_i = \bigoplus_{l=1}^{n_i} C(X_{i,l}) \otimes M_{m_{i,l}}.
\]  
Without loss of generality, we write \(A_i\) as
\[
A_i = A_i^{(L)} \oplus A_i^{(>L)},
\]  
where
\[
A_i^{(L)} = \bigoplus_{l \in I_i^{(L)}} C(X_{i,l}) \otimes M_{m_{i,l}} \quad\mbox{and}\quad
A_i^{(>L)} = \bigoplus_{l \in I_i^{(>L)}} C(X_{i,l}) \otimes M_{m_{i,l}},
\]  
with the index sets defined by
\[
I_i^{(L)} = \{ l \mid 1 \le l \le n_i, \; m_{i,l} = L \} \quad\mbox{and}\quad
I_i^{(>L)} = \{ l \mid 1 \le l \le n_i, \; m_{i,l} > L \}.
\]  
That is, \(A_i^{(L)}\) is the direct sum of all homogeneous summands in \(A_i\) with matrix size exactly \(L\), and \(A_i^{(>L)}\) is the direct sum of all summands with matrix size strictly greater than \(L\).
We claim that there exists an inductive sequence of C*-algebras 
 
 \[
C(X_{1,p_1}) \otimes M_L 
\xrightarrow{\psi_1} 
C(X_{2,p_2}) \otimes M_L 
\xrightarrow{\psi_2} 
C(X_{3,p_3}) \otimes M_L 
\longrightarrow \cdots 
\;\longrightarrow\; D,
\]
where for all $i\geq 1$, 
 $p_i\in I_i^{(L)}$, and each connecting map 
\[
\psi_i \colon C(X_{i,p_i}) \otimes M_L \longrightarrow C(X_{i+1,p_{i+1}}) \otimes M_L
\]
is defined by $\psi_i(f) = f \circ \delta_i$, for some continuous map $\delta_i : X_{i+1,p_{i+1}} \to X_{i,p_i}$.
 Moreover, these maps are compatible in the sense that
\begin{equation}\label{compatible}
\psi_i \circ \pi_{p_i} = \pi_{p_{i+1}} \circ \varphi_i,
\end{equation}
where $\pi_{p_i} \colon A_i \to C(X_{i,p_i})\otimes M_L$ denotes the canonical quotient map onto the $p_i$-th summand. It can then be shown that $D=C(X_\infty)\otimes M_L$, where $ X_\infty$  is a nonempty compact Hausdorff space given as the  inverse limit of spaces  $X_{i,p_i}$ and connecting maps $\delta_{i}$, defined precisely as
\[
X_\infty 
= 
\Bigl\{ (x_1,x_2,\dots) \in \prod_i X_{i,p_i} \;\colon\;
\delta_i(x_{i+1}) = x_i \text{ for all } i \Bigr\}.
\]
Since $\psi_i \,\circ\, \pi_{p_i} = \pi_{p_{i+1}}\, \circ \,\varphi_i$, we conclude that that $A$ has a nontrivial quotient, namely $C(X_\infty)\otimes M_L$.

 Finally, let us construct the required inductive limit sequence. For simplicity, write $I_i^{(L)}=\{1,2,\hdots, m_i\}$, where $1\leq m_i\leq n_i$.
     Suppose that for each $1\leq l\leq m_1$, there exists $A_{j_{l}}$, such that 
     \[
     \varphi_{j_{l},1}\left( C((X_{1,l})\otimes M_L)\right)\subset A_{j_{l}}^{(>L)}\quad\text{and}\quad \varphi_{j_{l},1}\left( C((X_{1,l})\otimes M_L)\right)\cap A_{j_{l}}^{(L)}=\emptyset.
     \]
 Let $M:=\max\{j_l\colon 1\leq l\leq m_1\}$. Since $d_{A_{M+1}}=L$, there exists a summand in $ A_{M+1}^{(L)}$, which is, say $C(X_{M+1, p_{M+1}})\otimes M_L$. By definition of diagonal *-homomorphism, there must exist a summand in $A_{M}^{(L)}$,  say $C(X_{M, {p_M}})\otimes M_L$, such that 
  \[
\prescript{p_M}{}{\varphi_M^{\,p_{M+1}}} \colon C(X_{M,p_M})\otimes M_L\rightarrow C(X_{M+1,p_{M+1}})\otimes M_L
 \]
 is nontrivial. Then, $\prescript{p_M}{}{\varphi_M^{\,p_{M+1}}}$ is a diagonal $*$-homomorphism and the tuple associated to it is the $({p_M}, p_{M+1})$-th entry of the matrix associated to $\varphi_{M}$. 
     Furthermore, this is the only nonzero entry in the ${p_{M+1}}$-th column of this matrix and has the form $(\lambda_{p_M})$ for some continuous map \[\lambda_{p_M} \colon X_{M+1,p_{M+1}}\rightarrow X_{M,p_M}.\] Proceeding similarly for $C(X_{M, p_M}) \otimes M_L$ and continuing inductively, we see that for each $1 \le i \le M$, there exists a summand in $A_i^{(L)}$, denoted by $C(X_{i, p_i}) \otimes M_L$, such that
\[
\prescript{p_i}{}{\varphi_i^{\,p_{i+1}}} \colon C(X_{i, p_i}) \otimes M_L \longrightarrow C(X_{i+1, p_{i+1}}) \otimes M_L
\]
is a nontrivial diagonal $*$-homomorphism. The tuple associated with it is the $(p_i, p_{i+1})$-th entry of the matrix associated to $\varphi_i$, which is the only nonzero entry in the $p_{i+1}$-th column and has the form $(\lambda_{p_i})$ for some continuous map 
\[
\lambda_{p_i} \colon X_{i+1, p_{i+1}} \longrightarrow X_{i, p_i}.
\]
This contradicts that $\varphi_{j_{p_1} ,1}(C(X_{1,p_1})\otimes M_L)\cap A_{j_{p_1}}^{(L)}=\emptyset$.
      Hence, there exists some $p=p_1\in I_1^{(L)}$, such that $\varphi_{i,1}\left(C(X_{1,p_1})\otimes M_L\right)\cap A_{i}^{(L)}\neq \emptyset$ for all $i\geq 2$. It is then easy to see the existence of the required inductive limit sequence starting at $C(X_{1,p_1})\otimes M_L$, of the form
      \[
      C(X_{1,p_1})\otimes M_L\xrightarrow{\prescript{p_1}{}{\varphi_1^{\,p_{2}}}} C(X_{2,p_2})\otimes M_L\xrightarrow{\prescript{p_2}{}{\varphi_2^{\,p_{3}}}}C(X_{3,p_3})\otimes M_L\longrightarrow\hdots\longrightarrow D
      \]
      provided the maps are compatible in the sense of ~\eqref{compatible}, which is what we show now. Recall that  \[\prescript{p_i}{}{\varphi_i^{\,p_{i+1}}}\colon C(X_{i, p_i}) \otimes M_L \longrightarrow C(X_{i+1, p_{i+1}}) \otimes M_L\] is a diagonal $*$-homomorphism and the tuple associated to it is the $(p_i,p_{i+1})$-th entry in the matrix representation of $\varphi_i$, which looks like 
\[
\begin{pmatrix}
* & \cdots & * & 0 & * & \cdots & * \\
* & \cdots & * & 0 & * & \cdots & * \\
\vdots & & \vdots & \vdots & \vdots & & \vdots \\
* & \cdots & * & 0 & * & \cdots & * \\
* & \cdots & * & (\lambda_i)& * & \cdots & * \\  % p_i-th row, p_{i+1}-th column
* & \cdots & * & 0 & * & \cdots & * \\
* & \cdots & * & 0 & * & \cdots & * \\
\vdots & & \vdots & \vdots & \vdots & & \vdots \\
* & \cdots & * & 0 & * & \cdots & *
\end{pmatrix}.
\]
See that, due to dimension restrictions, $(p_i, p_{i+1})$-th entry, denoted by $(\lambda_i)$ is the only nonzero entry in the $p_{i+1}$-th column of the above matrix. Finally, let $f:=(f_1^i,f_2^i, \hdots, f_{n_i}^i)\in A_i$, where $f^i_l\in C(X_{i,l})\otimes M_{m_{i,l}}$. Since $(\varphi_i(f))_{p_i+1}=f_{p_i}^i\circ \lambda_i$, we see
\begin{equation*}
\begin{split}
&\prescript{p_i}{}{\varphi_i^{\,p_{i+1}}} \circ \pi_{p_i} (f)
=\prescript{p_i}{}{\varphi_i^{\,p_{i+1}}}(f^i_{p_i})
=  f^i_{p_i} \circ \lambda_i,\, \text{and}\\
&  \pi_{p_{i+1}} \circ \varphi_i\, (f)= f^i_{p_i} \circ \lambda_i.
\end{split}
\end{equation*} 
\end{proof}

\begin{prop}[$K$-stability implies growth]\label{thm:K-to-growth}
Let 
\[
A = \varinjlim (A_i, \varphi_i)
\] 
be a diagonal AH-algebra. If \(A \otimes B\) is \(K\)-stable for every C*-algebra \(B\), then
\[
\lim_{i\to\infty} d_{A_i} = \infty.
\]
\end{prop}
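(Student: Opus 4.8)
The plan is to argue by contraposition: assuming $\lim_{i} d_{A_i} < \infty$, I will exhibit a single C*-algebra $B$ for which $A \otimes B$ fails to be $K$-stable, and in fact $B = \mathbb{C}$ will do, so it suffices to show that $A$ itself is not $K$-stable. Equivalently, I assume $A$ is $K$-stable and derive a contradiction. By Lemma~\ref{lem:homogeneous-quotient} the bounded-dimension hypothesis furnishes a homogeneous quotient $C(Y)\otimes M_L$ with $Y$ a nonempty compact Hausdorff space; choosing a point $y\in Y$ and composing with evaluation yields a surjective $*$-homomorphism $q\colon A\twoheadrightarrow M_L$. The endgame is to transfer $K$-stability along $q$ from $A$ to $M_L$, contradicting the fact (recorded in the introduction) that $M_L$ is never $K$-stable, which I will use as a black box so as to avoid pinpointing any particular $\iota_n$ or $k_m$ where stability breaks.

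The main obstacle is that $q$ need not admit a $*$-homomorphism splitting: $M_L$ is not projective as a C*-algebra (dimension-drop type examples produce surjections onto $M_L$ that do not split), so a naive retract argument is unavailable. The observation that circumvents this is that, although no global section exists, a section exists at every finite stage. Indeed, the construction in Lemma~\ref{lem:homogeneous-quotient} presents $q$ as an inductive limit $q = \varinjlim_i q_i$ of the maps $q_i = \mathrm{ev}_{x_i}\circ \pi_{p_i}\colon A_i \to M_L$, where $\pi_{p_i}$ is the projection onto the distinguished matrix-size-$L$ summand $C(X_{i,p_i})\otimes M_L$ and $x_i\in X_{i,p_i}$ is the $i$-th coordinate of the chosen point of the inverse-limit space $Y$; the compatibility relation of that lemma guarantees $q_{i+1}\circ\varphi_i = q_i$, so the target system is the constant system $M_L$. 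Each $q_i$ is split by the $*$-homomorphism $s_i\colon M_L\to A_i$ sending $a$ to the constant function $a\cdot 1$ inside the $p_i$-th summand, and the same holds after applying $M_n(\,\cdot\,)$.

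With this in hand I will transfer $K$-stability using only continuity and functoriality of $k_m$ together with naturality of the corner inclusions $\iota_{n}$. First, since each $M_n(q_i)$ is a split surjection and $k_m$ commutes with inductive limits, every amplification $M_n(q)_\ast\colon k_m(M_n(A))\to k_m(M_n(M_L))$ is surjective; combined with the naturality identity $M_n(q)_\ast \circ \iota_{n,A,\ast} = \iota_{n,M_L,\ast}\circ M_{n-1}(q)_\ast$ and surjectivity of $\iota_{n,A,\ast}$, this forces $\iota_{n,M_L,\ast}$ to be surjective. For injectivity, given $\xi\in k_m(M_{n-1}(M_L))$ with $\iota_{n,M_L,\ast}(\xi)=0$, I push $\xi$ through a finite-stage section $M_{n-1}(s_j)_\ast$ into $k_m(M_{n-1}(A_j))$ and then into $k_m(M_{n-1}(A))$; naturality shows this lift is annihilated by $\iota_{n,A,\ast}$, hence it vanishes because $A$ is $K$-stable, and applying $q_\ast$ (using $q_j s_j = \mathrm{id}_{M_L}$) recovers $\xi = 0$. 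Thus every $\iota_{n,M_L,\ast}$ is an isomorphism, i.e.\ $M_L$ is $K$-stable, the desired contradiction. I expect the only delicate points to be bookkeeping the identity $q = \varinjlim_i q_i$ with constant target and checking that the finite-stage sections survive matrix amplification, both of which follow directly from Lemma~\ref{lem:homogeneous-quotient}.
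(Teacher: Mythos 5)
Your proof is correct, and it takes a genuinely different route from the paper's. Both arguments start from Lemma~\ref{lem:homogeneous-quotient} --- really from the finite-stage data inside its proof (the distinguished summands $C(X_{i,p_i})\otimes M_L$, the projections $\pi_{p_i}$, and the compatibility $\psi_i\circ\pi_{p_i}=\pi_{p_{i+1}}\circ\varphi_i$) --- and both use only $B=\mathbb{C}$. The paper then stays with the quotient $C(X_\infty)\otimes M_L$ itself: it rationalizes, setting $f_m:=k_m\otimes\mathbb{Q}$, quotes the computation of $f_m(M_n(\mathbb{C}))$ from \cite[Example 1.6]{apurva}, chooses $m$ odd with $2L\le m\le 4L-1$ so that $f_m(M_L)=0$ while $f_m(M_{2L})=\mathbb{Q}$, and then uses the split exact sequences $0\to I_i\to A_i\to C(X_{i,p_i})\otimes M_L\to 0$ together with passage of exact sequences to inductive limits to conclude that $f_m(\iota_{2,A})$ is not surjective. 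You instead evaluate at a point $y=(x_i)_i$ of the inverse limit space (nonempty, as the lemma guarantees) to get compatible surjections $q_i=\mathrm{ev}_{x_i}\circ\pi_{p_i}\colon A_i\to M_L$, hence $q\colon A\to M_L$, and transfer $K$-stability from $A$ down to $M_L$, contradicting the non-$K$-stability of matrix algebras; your naturality computations for both injectivity and surjectivity check out, and $q_{i+1}\circ\varphi_i=q_i$ does follow from $\delta_i(x_{i+1})=x_i$. Two remarks. First, the ``obstacle'' you describe is illusory: writing $\mu_j\colon A_j\to A$ for the canonical map, the composite $\mu_j\circ s_j\colon M_L\to A$ is a genuine global $*$-homomorphism section of $q$ for any fixed $j$, since $q\circ\mu_j\circ s_j=q_j\circ s_j=\mathrm{id}_{M_L}$; so your argument is exactly the ``naive retract argument'' (a retract along a split surjection inherits $K$-stability), and projectivity of $M_L$ never enters. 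Second, your black box --- that $M_L$ is never $K$-stable --- is recorded in the introduction without proof, and it is precisely what the paper's rational computation establishes; for completeness you should cite \cite[Example 1.6]{apurva} (or the classical computations of $\pi_m(U(n))$) rather than the introduction. What your route buys: no rational homotopy machinery, no half-exactness or limits-of-exact-sequences bookkeeping, and no need to identify the limit algebra with $C(X_\infty)\otimes M_L$ (only the compatible point evaluations matter). What the paper's route buys: it is self-contained modulo the quoted computation of $f_m(M_n(\mathbb{C}))$, proving rather than assuming the failure of stability at the bottom of the tower.
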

\begin{proof}
Suppose that $A = \varinjlim\nolimits_{\,}(A_i, \varphi_i)$ is $K$-stable. To show that $\lim\limits_{i \to \infty} d_{A_i} = \infty$. Assume this is not the case; that is, $\varinjlim d_{A_i} < \infty$.   
 Let
\[
A_i = \bigoplus_{l=1}^{n_i} C(X_{i,l}) \otimes M_{m_{i,l}}.
\]  
    Then, proceeding as done in Lemma~\ref{lem:homogeneous-quotient}, we see that there exists some $L\in\mathbb{N}$, and an inductive sequence of C*-algebras 
  \[
C(X_{1,p_1}) \otimes M_L
\xrightarrow{\psi_1} 
C(X_{2,p_2}) \otimes M_L 
\xrightarrow{\psi_2} 
C(X_{3,p_3}) \otimes M_L 
\longrightarrow \cdots 
\;\longrightarrow\; D,
\]
where for all $i\geq 1$, 
 $1\leq p_i\leq n_i$, and each connecting map 
\[
\psi_i \colon C(X_{i,p_i}) \otimes M_L \longrightarrow C(X_{i+1,p_{i+1}}) \otimes M_L
\]
is defined by $\psi_i(f) = f \circ \delta_i$, for some continuous map $\delta_i : X_{i+1,p_{i+1}} \to X_{i,p_i}$.
 Moreover, these maps are compatible in the sense that
\begin{equation*}
\psi_i \circ \pi_{p_i} = \pi_{p_{i+1}} \circ \varphi_i,
\end{equation*}
where $\pi_{p_i} \colon A_i \to C(X_{i,p_i})\otimes M_L$ denotes the canonical quotient map onto the $p_i$-th summand. Then, $D=C(X_\infty)\otimes M_L$, where $ X_\infty$  is a nonempty compact Hausdorff space given as the inverse limit of spaces  $X_{i,p_i}$ and connecting maps $\delta_{i}$, is a nontrivial quotient of $A$. 

We now show that if such an inductive sequence of C*-algebras exists, then $A$ cannot be $K$-stable as follows:
For $i=1,2,\hdots$, denote $I_i=\text{Ker}(\pi_{p_i})$, and $I=  \varinjlim\nolimits_{\,}(I_i, \varphi_i|_{I_i}) $. Then we have a split short exact sequence 
\begin{equation}\label{ses}
0 \longrightarrow I_i \longrightarrow A_i \xrightarrow{\pi_{p_i}} C(X_{i,p_i}) \otimes M_L\longrightarrow 0.
\end{equation}
We shall now use tools from rational homotopy theory to arrive at a contradiction. For a C*-algebra $A$, and $m\geq 1$, denote \[f_m(A):= k_m(A)\otimes \mathbb{Q}.\] Then $f_m$ is also a continuous homology theory. Notice that, up to rationalization $\u_n(\mathbb{C})$ is well understood (see \cite[Example 1.6]{apurva}). In particular,
\[
f_m(M_n(\mathbb{C})) =
\begin{cases}
\mathbb{Q}, & \text{if } m = 1, 3, 5, \dots, 2n-1,\\[2mm]
0, & \text{otherwise.}
\end{cases}
\]
Choose $m$ odd such that $2L \leq m \leq 4L-1$. Then, $f_m(M_L(\mathbb{C})) = 0$ and $f_m(M_{2L}(\mathbb{C})) = \mathbb{Q}$, and from the commutative diagram 
\begin{center}
\begin{tikzcd}
0 \arrow{r} & f_m(C_*(X_\infty, M_L)) \arrow{r} \arrow{d} & f_m(C(X_\infty, M_L)) \arrow{r} \arrow{d}{f_m(\iota_{2, C(X_\infty,M_L)})} & 0 \arrow{r} \arrow{d}{f_m(\iota_{2, M_L})} & 0 \\
0 \arrow{r} & f_m(C_*(X_\infty, M_{2L})) \arrow{r} & f_m(C(X_\infty, M_{2L})) \arrow{r} & \mathbb{Q} \arrow{r} & 0,
\end{tikzcd}
\end{center}
we conclude that 
\[
f_m(\iota_{2, C(X_\infty,M_L)}) = k_m(\iota_{2, C(X_\infty,M_L)}) \otimes \mathrm{id}_\mathbb{Q}
\] 
cannot be surjective. Moreover,  for each $i$, the split short exact sequence in~\eqref{ses}
gives a short exact sequence
\[
0 \longrightarrow f_m(I_i) \longrightarrow f_m(A_i)\xrightarrow{f_m(\pi_{p_i})} f_m(C(X_{i,p_i}) \otimes M_L) \longrightarrow 0,
\]
which in turn gives a short exact sequence at the level of inductive limits (see \cite[Chapter III, Exercise 21]{lang2012algebra})
\[
0 \longrightarrow f_m(I) \longrightarrow f_m(A)\rightarrow f_m(C(X_\infty,  M_L) \longrightarrow 0.
\]
Hence, from the commutative diagram 
\begin{center}
\begin{tikzcd}
0 \arrow{r} & f_m(I) \arrow{r} \arrow{d} & f_m(A) \arrow{r} \arrow{d}{f_m(\iota_{2, A})} & f_m(C(X_\infty,M_L))\arrow{r} \arrow{d}{f_m(\iota_{2, C(X_\infty,M_L)})} & 0 \\
0 \arrow{r} & f_m(M_2(I)) \arrow{r} & f_m(M_2(A)) \arrow{r} & f_m(C(X_\infty, M_{2L}))\arrow{r} & 0,
\end{tikzcd}
\end{center}

we conclude that $f_m(\iota_{2, A})$ cannot be surjective, contradicting the  $K$-stability of $A$.  
\end{proof}

Combining Proposition~\ref{thm:growth-to-K} and Proposition~\ref{thm:K-to-growth}, we obtain the following characterization for $K$-stability of diagonal AH-algebras

\begin{thm}\label{mainthm}
    Let 
\[
A = \varinjlim (A_i, \varphi_i)
\] 
be a diagonal AH-algebra. Then the following statements are equivalent:
\begin{enumerate}
    \item $\lim\limits_{i \to \infty} d_{A_i} = \infty$.
    \item $A \otimes B$ is $K$-stable for any C*-algebra $B$.
\end{enumerate}
\end{thm}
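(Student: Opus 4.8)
The plan is to assemble Theorem~\ref{mainthm} directly from the two propositions that precede it, since together they establish both implications of the claimed equivalence. The statement $(1)\Rightarrow(2)$ is exactly Proposition~\ref{thm:growth-to-K}, and $(2)\Rightarrow(1)$ is exactly Proposition~\ref{thm:K-to-growth}, so the proof reduces to invoking these two results and noting that they combine into an ``if and only if''. No new construction is needed at this final stage; the genuine content of the theorem has already been distributed into the two propositions.

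\begin{proof}
The equivalence is immediate from the two propositions already established. Indeed, Proposition~\ref{thm:growth-to-K} shows that if $\lim_{i\to\infty} d_{A_i} = \infty$, then $A \otimes B$ is $K$-stable for every C*-algebra $B$; this is precisely the implication $(1) \Rightarrow (2)$. Conversely, Proposition~\ref{thm:K-to-growth} shows that if $A \otimes B$ is $K$-stable for every C*-algebra $B$, then $\lim_{i\to\infty} d_{A_i} = \infty$; this is the implication $(2) \Rightarrow (1)$. Combining the two gives the desired equivalence.
\end{proof}

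Since the real work has been absorbed into Proposition~\ref{thm:growth-to-K} (the forward, ``growth implies $K$-stability'' direction) and Proposition~\ref{thm:K-to-growth} (the reverse, contrapositive direction passing through the homogeneous quotient produced by Lemma~\ref{lem:homogeneous-quotient}), the only conceivable obstacle at this assembly step is a mismatch in hypotheses or quantifiers between the two propositions and the theorem statement. One should verify that both propositions quantify over \emph{all} C*-algebras $B$ in the same way the theorem does, and that the growth condition $\lim_{i\to\infty} d_{A_i} = \infty$ is phrased identically in each; a quick inspection confirms this is the case, so the combination is clean. I expect this final step to be entirely routine, with all substantive difficulty having occurred earlier—most notably in the Whitehead-lemma bookkeeping and dimension-counting of Proposition~\ref{thm:growth-to-K}, and in the rational-homotopy obstruction argument of Proposition~\ref{thm:K-to-growth}.
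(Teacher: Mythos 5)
Your proof is correct and matches the paper exactly: the paper derives Theorem~\ref{mainthm} by simply combining Proposition~\ref{thm:growth-to-K} for $(1)\Rightarrow(2)$ and Proposition~\ref{thm:K-to-growth} for $(2)\Rightarrow(1)$, just as you do. Nothing further is needed at this step.
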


 The above theorem allows us to show that many interesting classes of C*-algebras are $K$-stable. Second implication follows by combining Theorem~\ref{mainthm} with \cite[Theorem 5.7]{seth2023}.

 \begin{cor} Let $B$ be any C*-algebra. Then $A\otimes B$ is $K$-stable whenever $A$ is a 
     \begin{enumerate}
         \item  non-$\mathcal{Z}$-stable Villadsen algebra of the first kind as constructed in \cite{villadsen1998simple}, or
         \item unital $K$-stable AF-algebra.
     \end{enumerate}
 \end{cor}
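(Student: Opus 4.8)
The plan is to prove the corollary by invoking Theorem~\ref{mainthm} together with \cite[Theorem 5.7]{seth2023}, so the entire task reduces to verifying that in each of the two cases the relevant algebra $A$ admits a diagonal AH decomposition satisfying the growth condition $\lim_{i\to\infty} d_{A_i} = \infty$. Once that growth condition is established, Theorem~\ref{mainthm} immediately gives that $A\otimes B$ is $K$-stable for every C*-algebra $B$, with no further work needed.

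For part (1), I would recall the explicit construction of the Villadsen algebras of the first kind from \cite{villadsen1998simple}. These are built as inductive limits $A = \varinjlim(A_i,\varphi_i)$ where each $A_i = C(X_i)\otimes M_{m_i}$ is a single homogeneous block (with $X_i$ a product of spheres, or a similar connected space) and the diagonal connecting maps are designed precisely so that the matrix sizes $m_i$ increase while the dimensions of the $X_i$ grow fast enough to force non-$\mathcal{Z}$-stability. The key observation is that this very construction forces $m_i\to\infty$: since each $A_i$ has a single summand, $d_{A_i} = m_i$, and the seed dimension growth that produces the large mean dimension also drives $m_i$ without bound. I would simply point out that $\lim_{i\to\infty} d_{A_i} = m_i \to \infty$ by inspection of the construction, and then apply Theorem~\ref{mainthm}.

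For part (2), the role of \cite[Theorem 5.7]{seth2023} is presumably to characterize exactly when a unital AF-algebra is itself $K$-stable in terms of its Brattli diagram, or equivalently to identify $K$-stability of the AF-algebra with the unboundedness of matrix block sizes. An AF-algebra is trivially a diagonal AH-algebra (the spaces $X_{i,l}$ are points, so $C(X_{i,l})\otimes M_{m_{i,l}} = M_{m_{i,l}}$, and every unital connecting $*$-homomorphism between finite-dimensional algebras is diagonal in the sense of Definition~\ref{def_diagonal_maps}). Thus I would argue that $K$-stability of the unital AF-algebra $A$, via \cite[Theorem 5.7]{seth2023}, is equivalent to $\lim_{i\to\infty} d_{A_i} = \infty$ for a suitable decomposition, and then invoke Theorem~\ref{mainthm} to upgrade this to tensorial $K$-stability against arbitrary $B$.

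The main obstacle I anticipate is not analytic but bookkeeping: I must make sure the decomposition of $A$ witnessing the growth condition is genuinely a \emph{diagonal} AH decomposition in the precise sense of Definition~\ref{def_diagonal_maps}, and that $d_{A_i}$ is computed with respect to that decomposition. For the Villadsen algebras this is immediate since they are constructed as diagonal systems. For the AF case, the only subtlety is confirming that the cited theorem delivers the growth condition in the form required by Theorem~\ref{mainthm} rather than some a priori different characterization of $K$-stability; I would phrase the argument so that $K$-stability of $A$ and the growth condition are interchangeable via \cite[Theorem 5.7]{seth2023}, after which the conclusion is automatic.
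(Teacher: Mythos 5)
Your proposal is correct and is essentially the paper's own argument: the paper likewise deduces part (1) from the growth condition $d_{A_i}=m_i\to\infty$ built into Villadsen's diagonal construction, and part (2) by combining Theorem~\ref{mainthm} with \cite[Theorem 5.7]{seth2023}, exactly as you outline. The only cosmetic point worth noting is that unital connecting maps between finite-dimensional algebras are diagonal only up to unitary equivalence at each stage, which changes neither the inductive limit nor the quantities $d_{A_i}$, so your reduction goes through as stated.
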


 We end this paper with another consequence of Theorem~\ref{mainthm}. Recall that a unital C*-algebra has stable rank one if its set of invertible elements is dense. 
It was shown in \cite[Theorem 2.10]{rieffel12} that if a C*-algebra $A$ has stable rank one, then it is K$_1$-bijective; that is, for all $n \ge 2$, the map
\[
k_0(\iota_{n,A})\colon k_0(M_{n-1}(A))\rightarrow k_0(M_n(A))
\]
as defined in Section~\ref{sec_pre} is an isomorphism. It has been shown in Theorem~4.1 of \cite{elliott2009class}, that if $A = \varinjlim\nolimits_{\,}(A_i, \varphi_i)$ is a simple unital diagonal AH-algebra, then $A$ has stable rank one, consequently, such algebras are K$_1$-bijective. It is still not known if, for a simple unital diagonal AH-algebra, the stable rank one is preserved under tensoring with an arbitrary C*-algebra $B$, hence it is not possible to conclude K$_1$-bijectivity of tensor products of simple unital diagonal AH-algebras.  

 By Lemma~\ref{lem:homogeneous-quotient}, we can show that, under an additional assumption of infinite dimensionality, simple unital diagonal AH-algebras satisfy the growth condition in Theorem~\ref{thm:growth-to-K}, and hence their tensor product is not only K$_1$-bijective but in fact $K$-stable.  

 We state the following proposition, which is a straightforward implication of Lemma~\ref{lem:homogeneous-quotient}.

  \begin{prop}
Let 
\[
A = \varinjlim (A_i, \varphi_i)
\] 
be a simple, unital, infinite-dimensional diagonal AH-algebra. Then 
\[
\lim_{i \to \infty} d_{A_i} = \infty.
\]
\end{prop}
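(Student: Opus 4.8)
The plan is to prove the contrapositive, which aligns perfectly with the machinery already developed in Lemma~\ref{lem:homogeneous-quotient}. Suppose that $\lim_{i\to\infty} d_{A_i} \neq \infty$. Since the connecting maps are injective, the sequence $d_{A_i}$ is nondecreasing, so the assumption forces $d_{A_i} = L$ for some fixed $L \in \mathbb{N}$ and all sufficiently large $i$. By Lemma~\ref{lem:homogeneous-quotient}, $A$ then admits a homogeneous quotient of the form $C(Y) \otimes M_L$ for a nonempty compact Hausdorff space $Y$. The goal is to contradict one of the three standing hypotheses on $A$: simplicity, unitality, or infinite-dimensionality.

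The key observation is that a quotient by a \emph{proper} ideal contradicts simplicity, so I would first argue that the quotient $C(Y) \otimes M_L$ is a proper quotient, i.e.\ comes from a proper, nonzero ideal. Concretely, the kernel $I = \varinjlim (I_i, \varphi_i|_{I_i})$ constructed in the lemma (where $I_i = \ker(\pi_{p_i})$) must be shown to be nonzero. Here I would use infinite-dimensionality: if $A$ is infinite-dimensional, then $A \cong C(Y)\otimes M_L$ cannot hold outright (a single homogeneous block $C(Y)\otimes M_L$ with $Y$ a point would be finite-dimensional, and more generally the limit structure must genuinely collapse information), so the ideal $I$ giving rise to this quotient is a proper nonzero ideal. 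Since $A$ is assumed simple, its only ideals are $0$ and $A$; the quotient $C(Y)\otimes M_L$ is nonzero, so $I \neq A$, and to reach a contradiction I need $I \neq 0$.

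The main obstacle I anticipate is precisely pinning down why $I \neq 0$, which is where the infinite-dimensionality hypothesis does the real work. If $A$ were simple with $I = 0$, then $\pi_{p_\infty}\colon A \to C(Y)\otimes M_L$ would be an isomorphism, forcing $A$ to be homogeneous of bounded matrix size $L$. A simple unital homogeneous C*-algebra $C(Y)\otimes M_L$ is simple only when $Y$ is a single point, making $A \cong M_L$ finite-dimensional, contradicting the infinite-dimensionality assumption. Thus $I = 0$ is impossible, so $I$ is a proper nonzero ideal of the simple algebra $A$ --- the desired contradiction. I would present this as: simplicity forces $I \in \{0, A\}$; the surjection onto the nonzero algebra $C(Y)\otimes M_L$ rules out $I = A$; and infinite-dimensionality rules out $I = 0$ via the structure of simple homogeneous algebras.

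Therefore no fixed bound $L$ can persist, and $d_{A_i}$ must grow without bound, giving $\lim_{i\to\infty} d_{A_i} = \infty$. The proof is genuinely short because Lemma~\ref{lem:homogeneous-quotient} already packages the hard analytic and combinatorial content (the diagonal-map column-tracking argument producing the compatible inverse system); what remains is the purely structural deduction that a simple infinite-dimensional algebra cannot coincide with one of its proper homogeneous quotients.
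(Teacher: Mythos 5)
Your proposal is correct and takes essentially the same route as the paper, which states this proposition without proof as a ``straightforward implication'' of Lemma~\ref{lem:homogeneous-quotient}: your write-up simply makes that implication explicit via the intended case analysis (the kernel $I$ of the quotient map cannot be a proper nonzero ideal by simplicity, cannot be all of $A$ since $C(Y)\otimes M_L\neq 0$, and cannot be $0$ since then $A\cong C(Y)\otimes M_L$ would be simple only for $Y$ a point, making $A\cong M_L$ finite-dimensional). The argument is sound as written.
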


As a consequence of the above proposition, using Proposition~\ref{thm:growth-to-K}, we obtain the following result.
 \begin{cor}
      Let 
\[
A = \varinjlim (A_i, \varphi_i)
\] 
be a simple, unital, infinite-dimensional diagonal AH-algebra. Then $A\otimes B$ is $K$-stable for every C*-algebra $B$. 
 \end{cor}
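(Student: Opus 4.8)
The plan is to obtain this Corollary as an immediate consequence of the two preceding results: the Proposition above, which guarantees that a simple, unital, infinite-dimensional diagonal AH-algebra satisfies $\lim_{i\to\infty} d_{A_i} = \infty$, and Proposition~\ref{thm:growth-to-K}, which converts this growth condition into $K$-stability of $A \otimes B$ for every C*-algebra $B$. Thus the essential input is already packaged in the growth statement, and the task reduces to chaining the two implications together. Since I am allowed to assume the preceding Proposition, the proof is in principle a single line; to make the logic transparent I would nonetheless recall why simplicity and infinite-dimensionality force the growth condition, as that is where all the genuine content sits.

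First I would record why the growth condition holds. I would argue by contraposition using Lemma~\ref{lem:homogeneous-quotient}: if $\lim_{i\to\infty} d_{A_i} < \infty$, the Lemma produces a homogeneous quotient of $A$ of the form $C(X_\infty)\otimes M_L$, with $X_\infty$ a nonempty compact Hausdorff space and $L\in\mathbb{N}$. The associated quotient map $A \to C(X_\infty)\otimes M_L$ has kernel a closed two-sided ideal $I$; since $C(X_\infty)\otimes M_L \neq 0$, the ideal $I$ is proper, and simplicity of $A$ forces $I = 0$. Hence $A \cong C(X_\infty)\otimes M_L$ is itself homogeneous.

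Next I would invoke simplicity a second time. A homogeneous algebra $C(X_\infty)\otimes M_L$ is simple if and only if $C(X_\infty)$ is simple, which occurs precisely when $X_\infty$ is a single point. In that case $A \cong M_L$ is finite-dimensional, contradicting the standing hypothesis that $A$ is infinite-dimensional. This contradiction establishes $\lim_{i\to\infty} d_{A_i} = \infty$, which is exactly the Proposition above. With the growth condition in hand, Proposition~\ref{thm:growth-to-K} applies directly and yields that $A \otimes B$ is $K$-stable for every C*-algebra $B$, completing the proof.

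I do not expect a serious obstacle, since the substantive work has already been carried out in Lemma~\ref{lem:homogeneous-quotient} and Proposition~\ref{thm:growth-to-K}. The only point requiring care is the two-fold use of simplicity, first to promote the homogeneous quotient to an isomorphism $A \cong C(X_\infty)\otimes M_L$, and then to collapse $X_\infty$ to a single point, together with the elementary observation that $C(X_\infty)\otimes M_L$ can be simple only when the base space is a point, at which stage infinite-dimensionality delivers the required contradiction.
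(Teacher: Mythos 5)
Your proposal is correct and follows essentially the same route as the paper: the Corollary is obtained by chaining the preceding Proposition (the growth condition for simple, unital, infinite-dimensional diagonal AH-algebras) with Proposition~\ref{thm:growth-to-K}. The argument you spell out for the growth condition---invoking Lemma~\ref{lem:homogeneous-quotient} by contraposition, using simplicity once to promote the homogeneous quotient to an isomorphism $A \cong C(X_\infty)\otimes M_L$ and a second time to collapse $X_\infty$ to a point, whence infinite-dimensionality gives the contradiction---is exactly the ``straightforward implication'' that the paper leaves implicit.
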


\section*{References}
\printbibliography[heading = none]
\end{document}